\newcommand{\Nat}{\mathbb{N}}
\newcommand{\Int}{\mathbb{Z}}
\newcommand{\sep}{\mathrm{sep}}
\newcommand{\NN}{\mathcal{N}}
\newcommand{\UU}{\mathcal{U}}
\theoremstyle{plain}
\newtheorem{theorem}{\bf Theorem}[section]
\newtheorem{lemma}[theorem]{\bf Lemma}
\newtheorem{proposition}[theorem]{\bf Proposition}
\newtheorem{corollary}[theorem]{\bf Corollary}
\newtheorem{claim}[theorem]{\bf Claim}
\newtheorem{thmx}{Theorem}
\theoremstyle{definition}
\newtheorem{definition}[theorem]{\bf Definition}
\newtheorem{remark}[theorem]{\bf Remark}
\begin{document}
\title[Garden of Eden and weakly periodic points]{Garden of Eden and weakly periodic points for certain expansive actions of groups}
\author[M. Doucha]{Michal Doucha}
\address{Institute of Mathematics\\
	Czech Academy of Sciences\\
	\v Zitn\'a 25\\
	115 67 Praha 1\\
	Czech Republic}
\email{doucha@math.cas.cz}
\keywords{expansive dynamical systems, Garden of Eden theorem, entropy, periodic points, amenable groups, ends of groups}
\subjclass[2020]{37B05, 37B40, 37B65, 37C25, 37C85, 43A07, 20E06}
\thanks{The author was supported by the GA\v{C}R project 19-05271Y and RVO: 67985840.}

\begin{abstract}
We present several applications of the weak specification property and certain topological Markov properties, recently introduced by S. Barbieri, F. Garc\' ia-Ramos and H. Li, and implied by the pseudo-orbit tracing property, for general expansive group actions on compact spaces.

First we show that any expansive action of a countable amenable group on a compact metrizable space satisfying the weak specification and strong topological Markov properties satisfies the Moore property, i.e. every surjective automorphism of such dynamical system is pre-injective. This together with an earlier result of H. Li (where the strong topological Markov property is not needed) of the Myhill property, which we also re-prove here, establishes the Garden of Eden theorem for all expansive actions of countable amenable groups on compact metrizable spaces satisfying the weak specification and strong topological Markov properties. We hint how to easily generalize this result even for uncountable amenable groups and general compact, not necessarily metrizable, spaces.

Second, we generalize the recent result of D. B. Cohen that any subshift of finite type of a finitely generated group having at least two ends has weakly periodic points. We show that every expansive action of such a group having a certain Markov topological property, again implied by the pseudo-orbit tracing property, has a weakly periodic point. If it has additionally the weak specification property, the set of such points is dense.
\end{abstract}

\maketitle

\section*{Introduction}
Let $G$ be a group and $A$ a finite (discrete) set. Then $A^G$, with the product topology, is a compact topological space on which $G$ naturally acts by homeomorphisms. Such a topological dynamical system is called a (topological) shift and any of its (topologically) closed $G$-invariant subsets are called subshifts. These are the objects of study of symbolic dynamics, whose many results and notions have inspired and have been generalized to more general dynamical systems.

One of those is the Garden of Eden theorem. For $G=\Int^d$, Moore in \cite{Moore}, resp. Myhill in \cite{Myhill} proved that every continuous surjective $G$-equivariant map $\tau:A^G\to A^G$ is pre-injective, resp. conversely every such pre-injective map is surjective (we shall define pre-injectivity later). A dynamical system for which this equivalence holds is said to satify the \emph{Garden of Eden theorem}. This result was generalized to all amenable groups in \cite{CSMS99} and later also for certain subshifts of amenable groups (see \cite{Fio03}, \cite{CSCo12}). It didn't take long for the Garden of Eden theorem to be considered for many other dynamical systems (see e.g. \cite{CSCo16}, \cite{CSCo06},\cite{CSCoLi19},\cite{CSCoPh}, and \cite{Li19}). One of the most general versions of the Myhill property was considered by H. Li in \cite{Li19} where he proves that every expansive action of an amenable group on a compact metrizable space having the weak specification property satisfies the Myhill property. 

Our aim is to provide such a general result also for the Moore property. However, the weak specification property alone is not enough for establishing this property. This is known even for subshifts, where a counterexample was provided for a strongly irreducible subshifts in \cite{Fio00} and it was showed in \cite{Li19} that the weak specification property for subshifts is equivalent to being strogly irreducible. We add one of the topological Markov properties that were introduced very recently by S. Barbieri, F. Garc\' ia-Ramos, and H. Li in \cite{BaFRLiarxiv} and which generalizes the pseudo-orbit tracing property, also known as shadowing, which for subshifts corresponds to being of finite type (see \cite{ChKeo}).

Therefore, one of our main results is the following. All the notions will be defined in the next section.
\begin{thmx}
Let $G$ be a (countable) amenable group acting continuously and expansively on a compact metrizable space $X$ so that the action has the weak specification and strong topological Markov properties (recall again that it is implied by the pseudo-orbit tracing property). Then the dynamical system $(X,G)$ has the Moore property.
This together with the result of H. Li, which we also re-prove, implies that this dynamical system satisfies the Garden of Eden theorem.
\end{thmx}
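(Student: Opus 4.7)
The plan is to argue by contradiction using the classical entropy-comparison strategy of the Garden of Eden theorem. Assume that $\tau : X \to X$ is a continuous, $G$-equivariant and surjective map that fails to be pre-injective; then there exist asymptotic points $x \neq y$ in $X$ with $\tau(x) = \tau(y)$. I would first pin the asymptotic pair down quantitatively. Letting $c$ be an expansive constant, the set
\[
F_0 \;:=\; \{\, g \in G : d(gx, gy) \ge c/4 \,\}
\]
is finite, and the orbits agree to within $c/4$ off $F_0$. The strong topological Markov property should then supply a finite buffer $F_1 \supseteq F_0$ and a tolerance $\eta > 0$ with the following flipping property: whenever a point $z \in X$ shadows $x$ to within $\eta$ on a $G$-translate of $F_1 \setminus F_0$, there is a point $z' \in X$ that agrees with $z$ off that translate of $F_1$ and matches the corresponding translate of $y$ on $F_0$ (and symmetrically). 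In short, the local flip from $x$ to $y$ can be transplanted anywhere in $G$.

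Next I would set up the counting. Fix a Følner sequence $(F_n)$ for $G$. Inside each $F_n$, pick a maximal family of pairwise disjoint translates $g_1 F_1, \ldots, g_{k_n} F_1 \subseteq F_n$; amenability guarantees that $k_n/|F_n|$ tends to a constant $\alpha > 0$ depending only on $F_1$. Let $S \subseteq X = \tau(X)$ be a maximal $(F_n, \delta)$-separated set at some small scale $\delta$, and, by surjectivity, pick a preimage $z_s \in \tau^{-1}(s)$ for each $s \in S$. For every subset $T \subseteq \{1, \dots, k_n\}$, iterate the flipping lemma at the translates $\{g_i F_0 : i \in T\}$ to obtain a point $z_s^T \in X$; the translates are pairwise disjoint and buffered by $F_1 \setminus F_0$, so the flips do not interfere, and weak specification can be invoked to certify that a single point of $X$ simultaneously realising all chosen flips genuinely exists.

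The heart of the argument is then to verify that the $|S| \cdot 2^{k_n}$ points $\{z_s^T\}$ form an $(F_n, \delta')$-separated set in $X$ for some $\delta' > 0$ independent of $n$. Two modifications $z_s^T$ and $z_s^{T'}$ with $T \neq T'$ disagree on some $g_i F_0 \subseteq F_n$ by at least $c/8$; and for $s \neq s'$ the $\tau$-images of $z_s^T$ and $z_{s'}^{T'}$ stay in controlled neighbourhoods of $s$ and $s'$ respectively, so they separate by at least $\delta/2$ somewhere in $F_n$, and uniform continuity of $\tau$ propagates this back to $X$. Taking logarithms, dividing by $|F_n|$, and passing to the limit along the Følner sequence yields
\[
h_{\mathrm{top}}(X) \;\ge\; h_{\mathrm{top}}(\tau(X)) + \alpha \log 2.
\]
Since $\tau(X) = X$ and $h_{\mathrm{top}}(X)$ is finite by expansiveness, we must have $\alpha \le 0$, a contradiction. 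The Garden of Eden theorem then follows by combining this Moore property with the Myhill property of H.~Li.

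The main obstacle I expect is the bookkeeping in the gluing step: proving that an iteratively flipped configuration $z_s^T$ actually lies in $X$, is genuinely distinct for distinct choices of $T$, and has its $\tau$-image staying uniformly close to $s$ along $F_n$. This is precisely where both hypotheses enter: the strong topological Markov property drives each individual local flip at a translate of $F_0$, while weak specification underwrites the simultaneous realisation of all the chosen flips and controls long-range interference between them, provided the buffer $F_1$ is chosen large enough to absorb the spacing constants supplied by both properties.
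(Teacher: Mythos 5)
Your overall strategy (contradiction via an entropy count over a F\o lner sequence, a tiling by translates of a finite window containing $\Delta_\delta(x,y)$, local flips of the homoclinic pair driven by the strong topological Markov property, and weak specification to control the count) is the right circle of ideas, and your flipping lemma is a legitimate consequence of the strong topological Markov property. But your counting scheme has a genuine gap. You take an arbitrary preimage $z_s\in\tau^{-1}(s)$ and claim that for each subset $T$ of the tiles you can flip $z_s$ at the tiles indexed by $T$, obtaining $2^{k_n}$ pairwise separated points with controlled images. The flip from the $x$-pattern to the $y$-pattern is only available --- and only produces a genuinely different point with essentially the same image --- at a tile where $z_s$ already resembles the appropriate translate of $x$ to within the tolerance; an arbitrary preimage of an arbitrary $s\in\tau(X)$ need not resemble $x$ (or $y$) at any tile, so at most tiles there is nothing to flip and $z_s^T$ would coincide with $z_s^{T'}$. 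This is not mere bookkeeping: without it the factor $2^{k_n}$, and hence the term $\alpha\log 2$, disappears entirely. A second, related gap is the separation of $z_s^T$ from $z_{s'}^{T'}$ for $s\neq s'$: after planting or flipping patterns on the tiles, the images of these points are close to translates of $\tau(x)$ on the tiles and close to $s$, $s'$ only off the tiles, so if $s$ and $s'$ happen to be $\delta$-separated only at group elements lying inside a tile, the separation is destroyed. Repairing this requires working with sets separated with respect to $F_n$ minus the tiles and then comparing $\sep(\cdot,F_n\setminus\bigcup_i g_iF_1,\cdot)$ with $\sep(\cdot,F_n,\cdot)$, which costs an additional submultiplicativity estimate that your sketch does not supply.

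The paper's proof sidesteps both problems by running the count on the image side rather than multiplying preimages. It defines $Z$ to be the set of points that stay $\delta/8$-far from the translated $x$-pattern on every tile, flips a given $w$ only at those tiles where $w$ \emph{does} resemble the translated $x$ (so the flip is always available, always lands in $Z$, and never changes the $\tau$-image by more than the expansive constant), and concludes $\tau[Z]=\tau[X]$; then weak specification shows that forbidding the $x$-pattern on a positive-density set of tiles forces $h_{\mathrm{top}}(Z)<h_{\mathrm{top}}(X)$, and the chain $h_{\mathrm{top}}(\tau[X])=h_{\mathrm{top}}(\tau[Z])\le h_{\mathrm{top}}(Z)<h_{\mathrm{top}}(X)$ contradicts surjectivity. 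If you want to keep your multiplicative scheme, you would first have to use weak specification to plant the $x$-pattern at every tile before flipping and then carry out the separation repair above; the conditional-flip formulation is the cleaner route.
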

The restriction to countable groups and metrizable spaces is not at all necessary, although perhaps the most interesting. In Subsection~\ref{subsect:nonmetrizable} we hint on how to easily generalize the result also for uncountable groups and non-metrizable spaces. This simplifies and generalizes the result of Ceccherini-Silberstein and Cornaert from \cite{CSCo21}.\bigskip

Next we consider periodic, resp. weakly periodic points of dynamical systems. If $(X,G)$ is a dynamical system, we call a point $x\in X$ \emph{periodic} if its orbit is finite, or equivalently, if its stabilizer has a finite index in $G$. We call a point $x\in X$ \emph{weakly periodic} if its stabilizer is infinite. In a recent breakthrough \cite{Cohen}, Cohen showed that if $G$ is a finitely generated group having at least two ends, then every subshift of finite type of $G$ must contain a weakly periodic point. Compare this with the other results for several one-ended groups which admit a subshift of finite type on which the group acts freely (see e.g. the classical result for $\Int^2$ in \cite{Berger} and many other more recent developments in \cite{CuKa}, \cite{Mozes}, \cite{Cohen17}).

Here we generalize Cohen's result again to far more general dynamical systems. For this, we introduce a new topological Markov property, called here \emph{the cover strong topological Markov property}, in the spirit of \cite{BaFRLiarxiv}, which is implied by the pseudo-orbit tracing property and implies the uniform strong topological Markov property - we do not know if it is equivalent to the latter. We show the following.

\begin{thmx}
Let $G$ be a finitely generated group having at least two ends. Suppose that $G$ acts continuously and expansively on a compact metrizable space $X$ so that the action has the cover strong topological Markov property. Then $(X,G)$ has weakly periodic points.

If the action has moreover the weak specification property, then the set of such points is dense in $X$.
\end{thmx}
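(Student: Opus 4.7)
The plan is to combine the end-splitting structure of $G$ with the gluing power of the cover strong topological Markov property, assembling periodic ``building blocks'' obtained from a single point $x \in X$ into a genuinely $g$-invariant point $y \in X$, where $g \in G$ is an infinite-order element extracted from the end structure of $G$.

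First, using that $G$ has at least two ends, I would produce, via Stallings' theorem or directly via almost invariant subsets, an infinite-order element $g \in G$ and a set $\Omega \subsetneq G$ with $g\Omega \subsetneq \Omega$ and with finite Cayley-boundary. This yields a $g$-equivariant decomposition of $G$ into strips $\{g^n(\Omega \setminus g\Omega)\}_{n \in \Int}$ whose pairwise seams are finite and related by translation by $g$. Using expansiveness to embed $(X,G)$ as a subshift of some $A^G$ via a generator, I would then fix any $x \in X$ and define a candidate configuration $\tilde y \in A^G$ by $\tilde y(h) = x(g^{-n}h)$ for $h \in g^n(\Omega \setminus g\Omega)$. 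By construction $\tilde y$ is constant on the right $\langle g \rangle$-orbits and hence is $g$-invariant.

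The core of the proof applies the cover strong topological Markov property to the cover of $G$ obtained by slightly thickening the strips, producing an actual point $y \in X$ that agrees with $\tilde y$ on the cores of the strips. Because every seam of the cover is the $g$-translate of a single finite pattern, the gluing can be carried out $g$-equivariantly; if exact $g$-equivariance is not immediate, one instead produces $g^N$-invariant approximations $y_N$ by gluing over the thickened annuli $\Omega \setminus g^N\Omega$ for large $N$ and extracts a convergent subsequence using compactness of $X$. Either way $\langle g^N \rangle \subset \mathrm{Stab}(y)$ for some $N \ge 1$, so $y$ is weakly periodic.

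For the density statement, given any nonempty open $U \subset X$, I would choose $x_0 \in U$ together with a finite $K \subset G$ small enough that a basic neighbourhood of $x_0$ determined by values on $K$ is contained in $U$. The weak specification property then lets me replace the arbitrary $x$ of the previous step by a point matching $x_0$ on $K$ while remaining compatible with the strip-gluing elsewhere, producing a weakly periodic $y \in U$. The hardest step is the middle one: verifying that the cover strong topological Markov property is strong enough to yield a genuinely $g$-equivariant (or at least $g^N$-equivariant) point, rather than only approximate agreement with $\tilde y$ on larger and larger windows. This is precisely the reason for introducing the \emph{cover} form of the Markov property rather than a single-window version.
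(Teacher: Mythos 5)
Your overall architecture (an infinite-order element $g$ coming from the two-endedness of $G$, a $g$-translation-invariant strip decomposition of $G$ with finite seams, and the cover strong topological Markov property as the gluing device) matches the paper's. But there is a genuine gap at the heart of the gluing step: the cover strong topological Markov property does not let you glue the translates $g^{-n}x$ of an \emph{arbitrary} point $x$ along the strips. Its hypothesis requires the pieces being glued to be $\delta/2$-compatible on the overlaps $S\cdot F_n\cap F_m$; concretely, at the seam between consecutive strips you need $x$ and $g^{m}x$ to be $\delta/2$-close on a fixed finite window containing that seam. For a generic $x$ this simply fails, and then no point of $X$ need be close to your configuration $\tilde y$ near the seams --- $\tilde y$ is $g$-invariant as an element of $A^G$, but it need not be even locally admissible there, so the Markov property cannot be invoked. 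The missing idea is a recurrence/pigeonhole step: the paper fixes a finite $\delta/4$-dense net $N\subseteq X$, codes each point $u$ by the finitely many possible values of the map $s\mapsto(\text{nearest net points to }su)$ on the finite seam window $S^2$, and applies pigeonhole along the orbit $(g^{k}x)_{k}$ to find $m$ with $\xi(x)=\xi(g^{m}x)$, hence $d_{S^2}(x,g^{m}x)<\delta/2$. Only for this particular $m$ (and the corresponding strip width --- which is why the decomposition must be available for every $m$, as in the paper's Lemma~\ref{lem:Cohen}) does the seam compatibility hold; the resulting glued point $z$ is then shown to satisfy $g^{m}z=z$ exactly, by expansiveness.

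Two smaller points. Your fallback of producing $g^{N}$-invariant approximations and extracting a convergent subsequence does not help: a limit of points whose stabilizers contain $\langle g^{N}\rangle$ with $N\to\infty$ can have trivial stabilizer, and in any case each individual $y_N$ would already be weakly periodic if it existed --- the obstruction is constructing any one of them, which is again the seam compatibility. For density, matching $x_0$ on $K$ by weak specification destroys the periodicity of the point you perturb, so one must re-glue: the paper applies the cover strong topological Markov property a second time, to a coarser partition by unions of consecutive strips wide enough to contain the modified window, recovering invariance under a higher power $g^{lm}$. Your sketch omits this second gluing.
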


For examples of dynamical systems satisfying various topological Markov properties, the pseudo-orbit tracing property, and of systems satisfying them together with the weak specification property, we refer the reader e.g. to \cite{BaFRLiarxiv} and \cite{CSCoLi21}. We remark that among the standard examples are, besides various subshifts, also various finitely presented algebraic actions.

\section{Preliminaries}
Throughout the paper we work with groups acting continuously on compact (most of the time metrizable) spaces. Groups, ususally denoted by $G$, will be always discrete and will be implicitly assumed to be countable, although this assumption is usually either not necessary, or the arguments can be easily modified to work for uncountable groups as well. Topological spaces with a continuous action of a group $G$ are sometimes called \emph{$G$-spaces}.

When working with a compact metrizable space, we fix some compatible metric and formulate most of the notions using such a metric. For that reason, many of the statements are formulated for compact metric spaces instead of compact metrizable spaces. 
However, it should be emphasized that the statements are almost always of a topological, not metric, nature, so the choice of the compatible metric is irrelevant. This can be easily verified e.g. in the following definition, where we specify which topological dynamical systems we shall work with.

\begin{definition}\label{def:expansiveness}
Let $G$ be a group acting continuously on a compact metrizable space $X$. We say that the action is \emph{expansive} if, having fixed some compatible metric $d$ on $X$, there exists $\delta>0$ (called the \emph{expansiveness constant}) such that for every $x\neq y\in X$ there is $g\in G$ so that \[d(gx,gy)>\delta.\]
\end{definition}

Let $X$ be a compact $G$-space and $d$ a continuous pseudometric on $X$. For every subset $E\subseteq G$ we shall denote by $d_E$ the pseudometric \[d_E(x,y):=\sup_{g\in E} d(gx,gy).\]
\begin{lemma}\label{lem:keyexpansivefact}
	Let $G$ act continuously on a compact metric space expansively, with an expansive constant $\delta>0$. Then for every $0<\gamma$ there exists a finite set $D_\gamma\subseteq G$ such that for every $x,y\in X$ if $d(x,y)\geq \gamma$ then $d_{D_\gamma}(x,y)>\delta$.
\end{lemma}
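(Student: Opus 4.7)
The plan is to deduce the conclusion from a standard compactness argument applied to the product space $X\times X$. The point is that expansiveness gives a pointwise statement (for every pair at distance $\geq \gamma$, some group element separates them by more than $\delta$), and we want to upgrade it to a uniform statement witnessed by a fixed finite subset of $G$; compactness of the ``bad set'' $\{(x,y) : d(x,y)\geq \gamma\}$ is the natural tool for such an upgrade.

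First I would set $K := \{(x,y)\in X\times X : d(x,y)\geq \gamma\}$ and note that $K$ is a closed subset of the compact space $X\times X$, hence compact. For each $g\in G$, I would then consider the set
\[
V_g := \{(x,y)\in X\times X : d(gx,gy) > \delta\}.
\]
Since the action is by homeomorphisms and $d$ is continuous, the map $(x,y)\mapsto d(gx,gy)$ is continuous on $X\times X$, so each $V_g$ is open.

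The expansiveness hypothesis (Definition~\ref{def:expansiveness}) says precisely that for every $(x,y)\in X\times X$ with $x\neq y$ there is some $g\in G$ with $(x,y)\in V_g$. In particular, since every $(x,y)\in K$ satisfies $x\neq y$ (as $\gamma>0$), we obtain
\[
K \;\subseteq\; \bigcup_{g\in G} V_g.
\]
By compactness of $K$, this open cover admits a finite subcover, and I would take $D_\gamma$ to be the finite subset of $G$ indexing such a subcover. Then for any $(x,y)\in K$ there is $g\in D_\gamma$ with $d(gx,gy)>\delta$, which means $d_{D_\gamma}(x,y) = \sup_{g\in D_\gamma} d(gx,gy) > \delta$ (the supremum over a finite set being attained). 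This gives the claim.

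There is no real obstacle here; the only point worth double-checking is that continuity of the $G$-action together with continuity of $d$ is what makes the sets $V_g$ open, so that the compactness-of-$K$ argument applies. Everything else is routine.
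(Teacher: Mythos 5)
Your proof is correct and rests on the same compactness principle as the paper's own argument: the paper proceeds by contradiction, extracting a convergent net of ``bad'' pairs $(x_D,y_D)$ and contradicting expansiveness at the limit point, whereas you phrase the compactness of $K=\{(x,y)\in X\times X : d(x,y)\geq\gamma\}$ directly via a finite subcover of the open sets $V_g$. The two formulations are interchangeable and your version is complete as written.
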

\begin{proof}
	Suppose that no such finite set exists. Then for every finite $D\subseteq G$ there exist elements $x_D,y_D\in X$ such that $d(x_D,y_D)\geq \gamma$, yet $d_D(x,y)\leq \delta$. Without loss of generality, we may assume that $x_D\to x\in X$ and $y_D\to y\in X$ (where $(x_D)$ and $(y_D)$ are nets indexed and ordered by finite subsets of $G$ ordered by inclusion) By continuity, $d(x,y)\geq \gamma$ and $d_D(x,y)\leq \delta$ for every finite $D\subseteq G$. The first inequality implies that $x\neq y$, while the latter, using expansiveness, that $x=y$, a contradiction.
\end{proof}
The following, formally stronger, result follows immediately from Lemma~\ref{lem:keyexpansivefact}.
\begin{proposition}\label{prop:expansiveconstant}
	Let $G$ act continuously and expansively on a compact metric space expansively, with an expansive constant $\delta>0$. Then for every subset $S\subseteq G$, $\gamma>0$ and $x,y\in X$, if $d_S(x,y)\geq \gamma$, then $d_{D_\gamma\cdot S}(x,y)>\delta$.
\end{proposition}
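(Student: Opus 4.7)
The plan is to reduce directly to Lemma~\ref{lem:keyexpansivefact} by a short compactness argument that effectively lets me treat the supremum defining $d_S(x,y)$ as though it were attained. Given $d_S(x,y) = \sup_{g \in S} d(gx,gy) \geq \gamma$, I would pick a net $(g_i)_{i \in I}$ in $S$ with $d(g_i x, g_i y) \to d_S(x,y)$; by compactness of $X \times X$ and continuity of $d$, after passing to a subnet I may assume $(g_i x, g_i y) \to (x^*, y^*)$ for some pair with $d(x^*, y^*) \geq \gamma$.

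Next, I apply Lemma~\ref{lem:keyexpansivefact} to this limit pair to obtain some $h \in D_\gamma$ with $d(h x^*, h y^*) > \delta$. Since the action of $h$ is continuous, $d(h g_i x, h g_i y) \to d(h x^*, h y^*) > \delta$, so $d(h g_i x, h g_i y) > \delta$ eventually along the subnet. Because $h g_i \in D_\gamma \cdot S$, this gives $d_{D_\gamma \cdot S}(x,y) \geq d(h g_i x, h g_i y) > \delta$, which is exactly the desired conclusion.

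There is no real obstacle here; the argument is genuinely immediate, in line with the author's remark that the formally stronger statement follows at once from Lemma~\ref{lem:keyexpansivefact}. The only subtlety worth flagging is that the supremum over $S$ need not be attained when $S$ is infinite, but compactness of $X \times X$ together with continuity of the $G$-action accommodate this uniformly — in exactly the same spirit as the compactness/net argument used inside the proof of Lemma~\ref{lem:keyexpansivefact} itself. If one wanted a slightly more streamlined presentation, one could combine the two passages into a single compactness step that simultaneously extracts $g_i$ and $h$; but splitting them as above keeps the role of Lemma~\ref{lem:keyexpansivefact} transparent.
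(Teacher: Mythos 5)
Your argument is correct; the paper gives no proof of this proposition (it is asserted to follow immediately from Lemma~\ref{lem:keyexpansivefact}), and your direct reduction --- pass to a (limit of a) translate $(gx,gy)$ with $g\in S$, apply the lemma to that pair, and observe that $hg\in D_\gamma\cdot S$ --- is exactly the intended route. The compactness step you add to cover the case where $\sup_{g\in S}d(gx,gy)$ equals $\gamma$ without being attained is a legitimate and worthwhile refinement, since in that edge case the naive ``pick $g\in S$ with $d(gx,gy)\geq\gamma$'' step would fail.
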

\begin{definition}
	Let $G$ act continuously on a compact metric space $X$. For $x,y\in X$ and $\varepsilon>0$, we denote by $\Delta_\varepsilon(x,y)$ the set $\{g\in G\colon d(gx,gy)>\varepsilon\}$. If the constant $\varepsilon$ is fixed and clear from the context, we write simply $\Delta(x,y)$.
\end{definition}
Notice that if the action from the previous definition is moreover expansive with the expansiveness constant $\delta>0$ and $\varepsilon\leq \delta$, then for $x,y\in X$, $\Delta_\varepsilon(x,y)$ is non-empty if and only if $x\neq y$.

Next we introduce a crucial notion related to the Garden of Eden theorem.
\begin{definition}
	Let $G$ act continuously on a compact metric space $X$. We say that two elements $x,y\in X$ are \emph{homoclinic}, $x\sim y$ in symbols, if $\lim_{g\to\infty} d(gx,gy)=0$.
\end{definition}
It is easy to see that being homoclinic is an equivalence relation.
\begin{corollary}\label{cor:Deltahomoclinic}
	Let $G$ act continuously and expansively on a compact metric space $X$. Then for every $\delta>0$ and every elements $x,y\in X$ we have $x\sim y$ if and only if $\Delta_\delta(x,y)$ is finite.
\end{corollary}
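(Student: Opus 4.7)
The plan is to prove the two directions separately, with expansiveness (via Lemma~\ref{lem:keyexpansivefact}) entering only in the backward direction.

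The forward direction is immediate from the definition of homoclinicity: if $x \sim y$, then $d(gx, gy) \to 0$ as $g$ leaves every finite subset of $G$, so $\Delta_\delta(x, y) = \{g \in G : d(gx, gy) > \delta\}$ must be finite for any $\delta > 0$. No expansiveness is needed here.

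For the backward direction, assume $\Delta_\delta(x, y)$ is finite, where $\delta$ is to be read as at most the expansiveness constant. To conclude $x \sim y$, it suffices to show $\Delta_\varepsilon(x, y)$ is finite for every $\varepsilon > 0$. The case $\varepsilon \geq \delta$ is trivial since $\Delta_\varepsilon(x, y) \subseteq \Delta_\delta(x, y)$. For $\varepsilon < \delta$, I would apply Lemma~\ref{lem:keyexpansivefact} to the pair $(gx, gy)$ for each $g \in \Delta_\varepsilon(x, y)$: since $d(gx, gy) > \varepsilon$, there is a finite set $D_\varepsilon \subseteq G$ (depending only on $\varepsilon$) and some $h \in D_\varepsilon$ with $d(hgx, hgy) > \delta$, i.e.\ $hg \in \Delta_\delta(x, y)$. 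This yields the inclusion
\[
\Delta_\varepsilon(x, y) \;\subseteq\; D_\varepsilon^{-1} \cdot \Delta_\delta(x, y),
\]
a product of two finite sets, hence finite. (Equivalently one can invoke Proposition~\ref{prop:expansiveconstant} with $S = \Delta_\varepsilon(x,y)$ and $\gamma = \varepsilon$ to obtain the same conclusion in a single step.)

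I do not anticipate any substantive obstacle; the only fine point worth noting is the range of $\delta$. For $\delta$ strictly larger than the expansiveness constant, finiteness of $\Delta_\delta(x,y)$ is in general too weak to force homoclinicity (one can easily cook up non-homoclinic pairs in a full shift whose orbits never separate beyond some bound). Once $\delta$ is fixed in the correct regime, Lemma~\ref{lem:keyexpansivefact} does all the work, by letting us transfer a ``small-scale'' separation at level $\varepsilon$ back to an ``expansiveness-scale'' separation at level $\delta$ at the cost of a left-translation by a fixed finite set.
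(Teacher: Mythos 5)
Your argument is correct and is essentially the paper's own proof in direct rather than by-contradiction form: the forward direction is definitional, and your inclusion $\Delta_\varepsilon(x,y)\subseteq D_\varepsilon^{-1}\cdot\Delta_\delta(x,y)$, obtained by applying Lemma~\ref{lem:keyexpansivefact} pointwise, is exactly the paper's observation that $D_\gamma s$ must meet $\Delta_\delta(x,y)$ for each $s$ at which $d(sx,sy)\geq\gamma$. Two small remarks: your caveat on the range of $\delta$ is well taken (the corollary's ``for every $\delta>0$'' should be read as $\delta$ at most the expansiveness constant, a restriction the paper's own proof also uses implicitly), while the parenthetical one-step appeal to Proposition~\ref{prop:expansiveconstant} with $S=\Delta_\varepsilon(x,y)$ does not quite deliver the inclusion as stated, since its conclusion concerns only the supremum over $D_\varepsilon\cdot S$ and hence produces a single element of $\Delta_\delta(x,y)$, so the pointwise (singleton-by-singleton) application of Lemma~\ref{lem:keyexpansivefact} that you give first is the one to keep.
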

\begin{proof}
	Fix $\delta>0$ and $x,y\in X$. If $x\sim y$, then by definition $\lim_{g\to\infty} d(gx,gy)=0$, so $\Delta_\delta(x,y)$ is finite. Conversely, suppose that $\Delta_\delta(x,y)$ is finite, however $d(gx,gy)$ does not converge to $0$. Then there is $\gamma>0$ and an infinite set $S\subseteq G$ such that for all $s\in S$, $d(sx,sy)\geq \gamma$. By Lemma~\ref{lem:keyexpansivefact}, for each $s\in S$, $D_\gamma s\cap \Delta_\gamma(x,y)\neq\emptyset$. Since $S$ is infinite and $D_\gamma$ is finite, this implies that $\Delta_\delta(x,y)$ is infinite, a contradiction.
\end{proof}

We also recall the following result, likely well known, that the computation of topological entropy is easier for expansive dynamical systems.
\begin{proposition}\label{prop:entropyexpansive}
	Let $G$ be an amenable group and let $X$ be a compact expansive $G$-metric space, with an expansive constant $\delta>0$. Then $h_\mathrm{top}(X,G)=h_\sep(\delta,d)$.
\end{proposition}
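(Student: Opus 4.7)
The plan is to prove the nontrivial inequality $h_{\mathrm{top}}(X,G)\leq h_\sep(\delta,d)$; the reverse inequality is immediate from the standard characterization $h_{\mathrm{top}}(X,G)=\sup_{\varepsilon>0}h_\sep(\varepsilon,d)$, combined with the fact that $h_\sep(\varepsilon,d)$ is monotone non-increasing in $\varepsilon$. Because of this monotonicity and the fact that the sup is a limit as $\varepsilon\to 0^+$, it actually suffices to show that for every $0<\gamma<\delta$ one has $h_\sep(\gamma,d)\leq h_\sep(\delta,d)$, and then take the supremum (equivalently, the limit) over $\gamma$.

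Fix such a $\gamma$ and a F\o{}lner sequence $(F_n)$ in $G$. Let $D_\gamma\subseteq G$ be the finite set supplied by Lemma~\ref{lem:keyexpansivefact}, enlarged to contain the identity (this is harmless). If $E\subseteq X$ is an $(F_n,\gamma)$-separated set, then for any two distinct points $x,y\in E$ we have $d_{F_n}(x,y)\geq \gamma$, so Proposition~\ref{prop:expansiveconstant} (applied with $S=F_n$) yields $d_{D_\gamma F_n}(x,y)>\delta$. Hence $E$ is $(D_\gamma F_n,\delta)$-separated, and denoting the maximal cardinalities of such sets by $s(\cdot,\cdot)$ we obtain the key inequality
\[
s(F_n,\gamma)\;\leq\; s(D_\gamma F_n,\delta).
\]

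To conclude, observe that $(D_\gamma F_n)$ is again a F\o{}lner sequence (since $D_\gamma$ is finite), and $|D_\gamma F_n|/|F_n|\to 1$ because $e\in D_\gamma$ and the F\o{}lner property gives $|gF_n\triangle F_n|/|F_n|\to 0$ for each $g\in D_\gamma$. Taking logarithms in the above inequality, dividing by $|F_n|$, and using this ratio estimate yields
\[
\limsup_n \frac{1}{|F_n|}\log s(F_n,\gamma)\;\leq\;\limsup_n \frac{|D_\gamma F_n|}{|F_n|}\cdot\frac{1}{|D_\gamma F_n|}\log s(D_\gamma F_n,\delta)\;=\;h_\sep(\delta,d),
\]
so $h_\sep(\gamma,d)\leq h_\sep(\delta,d)$, as required.

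The only genuine subtlety is the last equality, which relies on the fact that for actions of countable amenable groups the separation entropy can be computed along an arbitrary F\o{}lner sequence and does not depend on the choice of sequence; this is the Ornstein--Weiss convergence theorem, which I would simply cite rather than reprove. Everything else reduces to Proposition~\ref{prop:expansiveconstant} and routine F\o{}lner bookkeeping.
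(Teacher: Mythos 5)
Your proposal is correct and follows essentially the same route as the paper: reduce to showing $h_\sep(\gamma,d)\leq h_\sep(\delta,d)$ for $0<\gamma<\delta$, use Proposition~\ref{prop:expansiveconstant} to get $\sep(d_{F_n},\gamma)\leq \sep(d_{D_\gamma F_n},\delta)$, and then exploit that $(D_\gamma F_n)$ is again a F\o{}lner sequence with $|D_\gamma F_n|/|F_n|\to 1$ together with the independence of the entropy from the choice of F\o{}lner sequence. The paper makes the same (implicit) appeal to that independence that you make explicit via Ornstein--Weiss, so there is nothing substantive to add.
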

\begin{proof}
	Recall that $h_\mathrm{top}(X,G)=\sup_{\varepsilon>0} h_\sep(\varepsilon,d)$, where\\ $h_\sep(\varepsilon,d)=\limsup_{n\to\infty}\frac{1}{|F_n|} \log \sep(d_{F_n},\varepsilon)$ and $(F_n)_n$ is an arbitrary F\o lner sequence (or net if $G$ is uncountable). So it suffices to show that for every $\varepsilon>0$ we have $h_\sep(\varepsilon,d)=h_\sep(\delta,d)$. Since for $0<\varepsilon'<\varepsilon$ we have $h_\sep(\varepsilon',d)\geq h_\sep(\varepsilon,d)$, it suffices to show that for every $0<\gamma<\delta$ we have $h_\sep(\gamma,d)\leq h_\sep(\delta,d)$.
	
	Fix $0<\gamma<\delta$. By Proposition~\ref{prop:expansiveconstant} there is a finite set $D_\gamma\subseteq G$ such that for every subset $S\subseteq G$ and $x,y\in X$ we have $d_{D_\gamma\cdot S}(x,y)\geq \delta$ if $d_S(x,y)\geq \gamma$. It follows that for every subset $S\subseteq G$, $\sep(d_{D_\gamma\cdot S},\delta)\geq \sep(d_S,\gamma)$. Fixing a F\o lner sequence (resp. net) $(F_n)_n$ we get \[\limsup_{n\to\infty}\frac{1}{|F_n|} \log \sep(d_{F_n},\gamma)\leq \limsup_{n\to\infty}\frac{1}{|F_n|} \log \sep(d_{D_\gamma\cdot F_n},\delta).\] Since $(F_n)_n$ is F\o lner, so for every finite subset $D\subseteq G$ (in particular for $D-D_\gamma$) we have $\lim_{n\to\infty} \frac{|D\cdot F_n\Delta F_n|}{|F_n|}=0$, we get
	\begin{enumerate}
		\item $(D_\gamma\cdot F_n)_n$ is F\o lner as well,
		\item $
		\begin{aligned}[t]\limsup_{n\to\infty}\frac{1}{|F_n|} \log \sep(d_{D_\gamma\cdot F_n},\delta) =\\
		=\limsup_{n\to\infty}\frac{1}{|D_\gamma\cdot F_n|} \log \sep(d_{D_\gamma\cdot F_n},\delta).\end{aligned}
		$ 
	\end{enumerate}
	
	This implies that $h_\sep(\gamma,d)\leq h_\sep(\delta,d)$ and we are done.
\end{proof}

\begin{definition}\label{def:memoryset}
	Let $X$ and $Y$ be compact metric spaces on which a group $G$ acts continuously and expansively with expansiveness constants $\delta_X>0$ and $\delta_Y>0$ respectively. Let $\tau:X\rightarrow Y$ be a continuous $G$-equivariant map. A \emph{memory} set for $\tau$ is a set $M\subseteq G$ such that for every $x,y\in X$ and $g\in G$, if $d_M(gx,gy)\leq \delta_X$ then $d(g\tau(x),g\tau(y))\leq \delta_Y$.
\end{definition}
\begin{lemma}\label{lem:memoryset}
	Let $X$, $Y$, and $\tau:X\rightarrow Y$ be as above. There exists a finite memory set for $\tau$.
\end{lemma}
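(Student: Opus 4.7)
The plan is to prove existence of a finite memory set by a straightforward compactness-plus-expansiveness argument, in the same spirit as Lemma~\ref{lem:keyexpansivefact}.

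First I would observe that, because $\tau$ is $G$-equivariant (so $g\tau(x)=\tau(gx)$) and $G$ acts on $X$, the memory-set condition is equivalent to the a priori weaker assertion: $M\subseteq G$ is a memory set for $\tau$ iff for all $x,y\in X$, whenever $d_M(x,y)\leq \delta_X$ one has $d(\tau(x),\tau(y))\leq \delta_Y$. Indeed, the general condition at $g$ applied to $(x,y)$ reduces via substitution $x':=gx$, $y':=gy$ (which range over all of $X$) to the case $g=e$.

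Next I would argue by contradiction. Suppose no finite memory set exists. Then for every finite $M\subseteq G$ there exist $x_M,y_M\in X$ with $d_M(x_M,y_M)\leq \delta_X$ but $d(\tau(x_M),\tau(y_M))>\delta_Y$. Viewing $(x_M,y_M)$ as a net in the compact space $X\times X$ indexed by the directed set of finite subsets of $G$ ordered by inclusion, I pass to a convergent subnet and obtain limits $x,y\in X$. For any fixed $g\in G$, eventually $g\in M$, so $d(gx_M,gy_M)\leq \delta_X$ holds along the subnet, and continuity of the action yields $d(gx,gy)\leq \delta_X$ for every $g\in G$.

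Finally I would invoke expansiveness of the $G$-action on $X$ with constant $\delta_X$: the inequality $d(gx,gy)\leq \delta_X$ for all $g\in G$ forces $x=y$. Continuity of $\tau$ then gives $\tau(x_M)\to \tau(x)=\tau(y)$ and $\tau(y_M)\to \tau(y)$, so $d(\tau(x_M),\tau(y_M))\to 0$, contradicting the strict lower bound $d(\tau(x_M),\tau(y_M))>\delta_Y>0$. I do not foresee a real obstacle here; the only subtlety is the careful manipulation of the net indexed by finite subsets of $G$ (which for countable $G$ can be replaced by an exhausting sequence and an ordinary convergent subsequence), and the observation that non-strict inequalities survive passing to limits, which is exactly what the expansiveness hypothesis is calibrated to handle.
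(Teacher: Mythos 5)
Your proof is correct and follows essentially the same route as the paper: assume no finite memory set exists, extract witnesses $(x_M,y_M)$ indexed by finite subsets of $G$, pass to a convergent subnet, and use expansiveness to force $x=y$, contradicting $d(\tau(x_M),\tau(y_M))>\delta_Y$ via continuity of $\tau$. The only cosmetic difference is that you perform the reduction to $g=e$ (via equivariance) as an explicit preliminary step, whereas the paper folds it into the choice of witnesses by translating $x_E,y_E$ by $g_E$.
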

\begin{proof}
	Suppose the contrary. Then for every finite set $E\subseteq G$ there are $x_E,y_E\in X$ and $g_E\in G$ such that $d_E(g_E x_E,g_E y_E)\leq \delta_X$, yet $d(g_E\tau(x_E),g_E\tau(y_E))>\delta_Y$. By replacing $x_E$, resp. $y_E$ by $g_E^{-1}x_E$, resp. $g_E^{-1} y_E$ we may and will assume that $g_E=1_G$. By compactness, there is a net $(E_i)_{i\in I}$ such that $\bigcup_{i\in I} E_i=G$ and $\lim_i x_{E_i}=x$ and $\lim_i y_{E_i}=y$. By continuity and since $(E_i)_i$ cover the group, we have $d(gx,gy)\leq \delta_X$ for every $g\in G$, thus $x=y$ by expansiveness. By continuity, for large enough $i$ we must  have $d(\tau(x_{E_i}),\tau(y_{E_i}))\leq \delta_Y$, a contradiction.
\end{proof}

What follows next is a list of definitions of several specification properties that will be used in this paper.
\begin{definition}
	Let $X$ be a $G$-space. We say that \begin{itemize}[leftmargin=5.5mm]
		\item $X$ has the \emph{weak specification property} if for every $\varepsilon>0$ there exists a finite symmetric $S\subseteq G$ such that for every subsets $A_1,A_2\subseteq G$ and $x_1,x_2\in X$ such that $S\cdot A_1\cap A_2=\emptyset$ there is $y\in X$ satisfying $d(gx_i,gz)<\varepsilon$, $i\in\{1,2\}$ and $g\in A_i$.
		
		\item $X$ has \emph{the pseudoorbit tracing property} (POTP), or \emph{shadowing}, if for every $\varepsilon>0$ there exist $\gamma>0$ and finite $S\subseteq G$ such that for every $G$-indexed set $(x_g)_{g\in G}\subseteq X$ satisfying $d(s x_g, x_{sg})<\gamma$, for all $g\in G$ and $s\in F$ there exists $z\in X$ such that $d(gz,x_g)<\varepsilon$ for every $g\in G$.
		
		\item $X$ has \emph{the strong topological Markov property} if for every $\varepsilon>0$ there exist $\gamma>0$ and finite $S\subseteq G$ such that for every $x,y\in X$  and every finite $A\subseteq G$ satisfying $d_{SA\setminus A}(x,y)<\gamma$ there exists $z\in X$ such that $d_A(z,x)<\varepsilon$ and $d_{G\setminus A}(z,y)<\varepsilon$.
		
		\item $X$ has \emph{the uniform strong topological Markov property} if for every $\varepsilon>0$ there exist $\gamma>0$ and finite $S\subseteq G$ such that for every subset $A\subseteq G$ and every $V\subseteq G$ with $S\cdot A\cdot v_1\cap S\cdot A\cdot v_2=\emptyset$, for $v_1\neq v_2\in V$, and every $V$-indexed set $(x_v)_{v\in V}\subseteq X$ and an element $y\in X$ satisfying $d_{(SA\setminus A)v} (x_v,y)<\gamma$, for all $v\in V$, there exists $z\in X$ such that $d_{Av} (x_v,z)<\varepsilon$, for all $v\in V$, and $d_{G\setminus AV}(z,y)<\varepsilon$.
	\end{itemize}

\end{definition}
\begin{remark}
The weak specification in this generality for general group actions was defined in \cite{NPLi}, the pseudo-orbit tracing property (or shadowing) in this generality in \cite{OsTi}. The topological Markov properties were introduced in the recent \cite{BaFRLiarxiv}.
\end{remark}

\begin{lemma}\label{lem:POTP}
	Let $G$ act expansively on a compact metric space $X$ with an expansiveness constant $\delta>0$. Then $X$ has the pseudoorbit tracing property if and only if there is a finite set $F\subseteq G$ such that for every $G$-indexed set $(x_g)_{g\in G}$ satisfying $d(s x_g,x_{sg})\leq \delta/2$ for every $g\in G$ and $s\in F$ there is $z\in X$ such that $d(gx,x_g)\leq \delta/2$ for every $g\in G$.
\end{lemma}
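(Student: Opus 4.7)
The plan for both directions is the same: use Lemma~\ref{lem:keyexpansivefact} to trade a pointwise $\gamma$-bound on $d(a,b)$ for a uniform $\delta$-bound on $d_{D_\gamma}(a,b)$ over a finite window $D_\gamma$, then establish that uniform bound by a triangle-inequality split that absorbs a $\delta/2$-bound from either the pseudo-orbit condition or the shadow. (I also note that the ``$d(gx,x_g)$'' in the statement should read ``$d(gz,x_g)$''.)

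\emph{Backward direction (single-scale shadowing $\Rightarrow$ POTP)}: this is a straightforward sharpening. Given $\varepsilon>0$, I would pick $D_\varepsilon$ from Lemma~\ref{lem:keyexpansivefact} and take $\gamma:=\delta/2$ and $S:=F\cup D_\varepsilon$. A $(\gamma,S)$-pseudo-orbit is in particular $(\delta/2,F)$-pseudo, so the hypothesis supplies $z$ with $d(gz,x_g)\leq \delta/2$ for every $g$. For each $g\in G$ and $t\in D_\varepsilon$, splitting $d(tgz,tx_g)$ through $x_{tg}$ bounds it by $\delta/2+\delta/2=\delta$, using the shadow at $tg$ and the pseudo-orbit condition at $t\in S$. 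Thus $d_{D_\varepsilon}(gz,x_g)\leq \delta$, which Lemma~\ref{lem:keyexpansivefact} upgrades to $d(gz,x_g)<\varepsilon$.

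\emph{Forward direction (POTP $\Rightarrow$ single-scale shadowing)}: this is the more delicate step. I would feed $\varepsilon=\delta/2$ into the POTP to obtain $\gamma_0>0$ and finite $S_0\subseteq G$, then apply Lemma~\ref{lem:keyexpansivefact} with parameter $\gamma_0$ to produce $D_0$, and set $F:=D_0\cup D_0S_0$. The goal is to show every $(\delta/2,F)$-pseudo-orbit $(x_g)$ is automatically a $(\gamma_0,S_0)$-pseudo-orbit, at which point POTP produces the $\delta/2$-shadow. For $s\in S_0$ and $t\in D_0$, splitting $d(tsx_g,tx_{sg})$ through $x_{tsg}$ gives the first summand $\leq\delta/2$ because $ts\in D_0S_0\subseteq F$, and the second $\leq\delta/2$ because $t\in D_0\subseteq F$, so $d_{D_0}(sx_g,x_{sg})\leq\delta$, whence $d(sx_g,x_{sg})<\gamma_0$ by Lemma~\ref{lem:keyexpansivefact}.

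The only non-mechanical step is recognizing that $F$ must contain both $D_0$ and the product $D_0S_0$ in the forward direction, precisely so that both sides of the triangle-inequality split through $x_{tsg}$ are available as pseudo-orbit conditions at scale $\delta/2$. Once this choice is made, both directions reduce to the same two-step pattern (``pseudo-orbit plus shadow gives a uniform $\delta$-bound on a finite window, hence a small pointwise distance''), and no further obstacle appears.
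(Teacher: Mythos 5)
Your proof is correct and follows essentially the same route as the paper: both directions use Lemma~\ref{lem:keyexpansivefact} to convert a uniform $\delta$-bound over the finite window $D_\gamma$ (obtained by the same triangle-inequality split through $x_{tsg}$, resp.\ $x_{tg}$) into the required pointwise bound, and your choices $F:=D_0\cup D_0S_0$ and $S:=F\cup D_\varepsilon$ are just slightly more explicit versions of the paper's $D_\gamma\cdot S$ and $D_\varepsilon\cdot F$ (which implicitly assume the relevant sets contain the unit). You are also right that the statement's ``$d(gx,x_g)$'' should read ``$d(gz,x_g)$''.
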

\begin{proof}
	Suppose that $X$ has the POTP. Apply the definition for $\delta/2$ as $\varepsilon$ in order to get some $\gamma$ and a finite set $S\subseteq G$. Let $D_\gamma$ be a finite symmetric set from Lemma~\ref{lem:keyexpansivefact} and set $F:=D_\gamma\cdot S$ which we claim to be the desired finite set. Indeed, let $(x_g)_{g\in G}\subseteq X$ be a $G$-indexed set satisfying $d(t x_g,x_{tg})\leq \delta/2$ for every $g\in G$ and $t\in F$. We claim that for every $g\in G$ and $s\in S$ we have $d(s x_g,x_{sg})\leq \gamma$ which will be enough. Fix $g\in G$ and $s\in S$. Then for every $t\in D_\gamma$ we have \[d((ts)x_g,t x_{sg})\leq d((ts)x_g,x_{tsg})+d(x_{tsg},t x_{sg})\leq \delta/2+\delta/2,\] which by definition of $D_\gamma$ implies that $d(s x_g,x_{sg})\leq \gamma$ as needed.\medskip
	
	Now conversely suppose $X$ satisfies the condition from the statement and let us show it has the POTP. Fix $\varepsilon>0$. We claim that $\gamma=\delta/2$ and $S=D_\varepsilon\cdot F$ are as desired. Let $(x_g)_{g\in G}\subseteq X$ be such that $d(s x_g,x_{sg})\leq \delta/2$ for every $g\in G$ and $s\in S$. Since $F\subseteq S$ we get that there is $z\in X$ satisfying $d(gz,x_g)\leq \delta/2$ for every $g\in G$. We claim that in fact $d(gz,x_g)\leq \varepsilon$ is true. Indeed, fix $g\in G$. We check that for every $t\in D_\varepsilon$ we have $d((tg)z,t x_g)\leq\delta$ which will prove the claim. So take $t\in D_\varepsilon$,  we have \[d((tg)z,t x_g)\leq d((tg)z,x_{tg})+d(x_{tg},t x_g)\leq \delta/2+\delta/2,\] as desired.
\end{proof}

The following two lemmas are proved analogously as Lemma~\ref{lem:POTP} and the proofs are left to the reader.
\begin{lemma}\label{lem:WSP}
	Let $G$ act expansively on a compact metric space $X$ with an expansiveness constant $\delta>0$. Then $X$ has the weak specification property if and only if there is a finite symmetric set $F\subseteq G$ such that for every subsets $A_1,A_2\subseteq G$ and $x_1,x_2\in X$ such that $A_1\cdot F\cap A_2=\emptyset$ there is $y\in X$ satisfying $d(gx_i,gz)<\delta/2$, $i\in\{1,2\}$ and $g\in A_i$.
\end{lemma}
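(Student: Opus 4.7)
The plan is to follow the template of Lemma~\ref{lem:POTP} almost verbatim, with the roles of the pseudo-orbit and its tracing point replaced by the separated sets $A_1, A_2$ and the ``shadowing'' point $y$. The key point, as in the POTP case, is that expansiveness --- through Lemma~\ref{lem:keyexpansivefact} --- lets us collapse the universally quantified $\varepsilon>0$ appearing in the weak specification property down to a single working instance at the expansiveness constant, with the price of enlarging the finite set.

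For the forward direction I apply the weak specification property to $\varepsilon:=\delta/2$ and obtain a finite symmetric set $S\subseteq G$; I then simply take $F:=S$. The conclusion $d(gx_i,gy)<\delta/2$ demanded by the lemma is exactly what the weak specification property delivers.

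For the backward direction I fix an arbitrary $\varepsilon>0$, which without loss of generality I may assume satisfies $\varepsilon\leq\delta/2$, and I let $D_\varepsilon$ be the finite set produced by Lemma~\ref{lem:keyexpansivefact} at $\gamma:=\varepsilon$, symmetrized. I then define the finite symmetric set $S:=D_\varepsilon\cdot F\cdot D_\varepsilon$. Given $A_1,A_2\subseteq G$ separated by $S$ in the sense of the weak specification definition, a routine set-theoretic verification shows that the enlarged sets $D_\varepsilon\cdot A_1$ and $D_\varepsilon\cdot A_2$ are separated by $F$ in the sense of the present lemma. Applying the lemma's hypothesis to these enlarged sets produces a point $y\in X$ with $d(gx_i,gy)<\delta/2\leq\delta$ for every $g\in D_\varepsilon\cdot A_i$. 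For any $g\in A_i$ and any $t\in D_\varepsilon$ we have $tg\in D_\varepsilon\cdot A_i$, so $d(tgx_i,tgy)\leq\delta$, i.e.\ $d_{D_\varepsilon}(gx_i,gy)\leq\delta$. The contrapositive of Lemma~\ref{lem:keyexpansivefact}, applied to the pair $gx_i,gy$, then upgrades this to $d(gx_i,gy)<\varepsilon$, which is precisely the weak specification property at $\varepsilon$ with witness $S$.

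The only genuinely subtle ingredient is bookkeeping the sides --- left versus right --- on which the finite sets multiply in the separation conditions, so that the enlargement $A_i\mapsto D_\varepsilon\cdot A_i$ translates an $S$-separation into an $F$-separation. Choosing $F$ and $D_\varepsilon$ symmetric from the start makes these manipulations harmless, so this is more a matter of notational care than a real obstacle; everything else is a direct transcription of the expansiveness trick used in the proof of Lemma~\ref{lem:POTP}.
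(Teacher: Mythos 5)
The paper never writes this proof out---it declares Lemma~\ref{lem:WSP} ``proved analogously as Lemma~\ref{lem:POTP}'' and leaves it to the reader---and your plan is exactly that intended argument: for the forward direction instantiate the weak specification property at $\varepsilon=\delta/2$; for the converse enlarge the finite set to $S:=D_\varepsilon\cdot F\cdot D_\varepsilon$, enlarge $A_i$ to $D_\varepsilon\cdot A_i$, and use the contrapositive of Lemma~\ref{lem:keyexpansivefact} to upgrade $d_{D_\varepsilon}(gx_i,gy)\leq\delta$ to $d(gx_i,gy)<\varepsilon$. That skeleton is right, and the final upgrading step is carried out correctly.

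The one point you explicitly wave away, however, is the one place the argument as you set it up actually breaks. The definition of weak specification separates by \emph{left} multiplication, $S\cdot A_1\cap A_2=\emptyset$, while the lemma as printed separates by \emph{right} multiplication, $A_1\cdot F\cap A_2=\emptyset$. Symmetry of $F$ and $D_\varepsilon$ does not reconcile these: symmetry is closure under inverses, not commutation, and in a non-abelian group the two conditions are incomparable (in the free group on $a,b$ take $A_1=\{a^n\}$, $A_2=\{ba^n\}$ with $n$ large: then $A_1\cdot F\cap A_2=\emptyset$ for any fixed finite $F$, yet $S\cdot A_1\cap A_2\neq\emptyset$ once $b\in S$). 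Concretely, your ``routine set-theoretic verification'' must rule out identities $t_2^{-1}t_1a_1f=a_2$ with $t_i\in D_\varepsilon$, $f\in F$, but the hypothesis $D_\varepsilon FD_\varepsilon\cdot A_1\cap A_2=\emptyset$ only forbids $a_2=sa_1$, and $t_2^{-1}t_1a_1f$ is not of that form; the forward direction has the same defect. The resolution is not notational care but a correction of the statement: the lemma's separation condition must be read as $F\cdot A_1\cap A_2=\emptyset$, which matches the definition and every invocation of the lemma in the paper (e.g.\ in Claim~\ref{claim:GOE2} the separation actually used is $D_{\delta/8}\cdot S\cdot\Delta t_i\cap(F_j\setminus Et_i)=\emptyset$). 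Under that reading your computation closes: $ft_1a_1=t_2a_2$ forces $a_2=(t_2^{-1}ft_1)a_1$ with $t_2^{-1}ft_1\in D_\varepsilon FD_\varepsilon=S$, contradicting the hypothesis, and the rest of your proof goes through unchanged.
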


\begin{lemma}\label{lem:sTMP}
	Let $G$ act expansively on a compact metric space $X$ with an expansiveness constant $\delta>0$. Then $X$ has the strong topological Markov property if and only if there is a finite symmetric set $F\subseteq G$ containing the unit such that for every finite set $A\subseteq G$, every $V\subseteq G$ with $v_1\notin FAv_2$, for $v_1\neq v_2\in V$ and every $V$-indexed set $(x_v)_{v\in V}\subseteq X$ and an element $y\in X$ satisfying $d_{(FA\setminus A)v} (x_v,y)<\delta/2$, for all $v\in V$, there exists $z\in X$ such that $d_{Av} (x_v,z)<\delta/2$, for all $v\in V$, and $d_{G\setminus AV}(z,y)<\delta/2$.
\end{lemma}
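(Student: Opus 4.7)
The plan is to parallel the proofs of Lemma~\ref{lem:POTP} and Lemma~\ref{lem:WSP}: use Proposition~\ref{prop:expansiveconstant} as the main translation device between the general $\varepsilon$-control demanded by the strong topological Markov property and the concrete scale $\delta/2$ afforded by expansiveness. In each direction the key move is to replace the relevant finite parameter set by its product with $D_\gamma$ or $D_\varepsilon$ from Lemma~\ref{lem:keyexpansivefact}, so that $\delta$-type and $\gamma$-type bounds can be interchanged.

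For the forward direction, I would apply the strong topological Markov property with $\varepsilon := \delta/2$ to obtain $\gamma > 0$ and a finite $S \subseteq G$, then let $F$ be the symmetric closure of $D_\gamma \cdot S \cup \{1_G\}$. Given $(x_v)_{v \in V}$, a finite $A$, and $y \in X$ satisfying $d_{(FA \setminus A)v}(x_v, y) < \delta/2$ for every $v$, the task is to glue the local data $x_v$ onto $y$ on each translate $Av$. The condition $v_1 \notin FAv_2$ guarantees that the sets $FAv$ are pairwise disjoint, so single-pair strong TMP can be applied locally, one $v$ at a time, for each finite $V' \subseteq V$, producing an approximation $z_{V'}$. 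The $\gamma$-closeness output of single-pair TMP is upgraded to the desired $\delta/2$-closeness via Proposition~\ref{prop:expansiveconstant} after multiplication by $D_\gamma \subseteq F$. A compactness limit of $z_{V'}$ as $V' \nearrow V$ yields $z$.

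For the reverse direction, given $\varepsilon > 0$, I would set $\gamma := \delta/2$ and $S := F \cdot D_\varepsilon$. Assuming $d_{SA\setminus A}(x, y) < \gamma$, apply the characterization with $V := \{1_G\}$ (making the disjointness hypothesis vacuous) and the inflated $A' := D_\varepsilon A \supseteq A$. The required assumption $d_{FA' \setminus A'}(x, y) < \delta/2$ follows from $FA' \setminus A' \subseteq FA' \setminus A = SA \setminus A$ and the standing hypothesis. The conclusion yields $z$ with $d_{A'}(z, x) < \delta/2$ and $d_{G \setminus A'}(z, y) < \delta/2$, and since $A' = D_\varepsilon A$, Proposition~\ref{prop:expansiveconstant} gives $d_A(z, x) < \varepsilon$ immediately. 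For $d_{G \setminus A}(z, y) < \varepsilon$, combine the direct bound on $G \setminus A'$ with a triangle-inequality estimate on the annulus $A' \setminus A$, using $d_{SA\setminus A}(x, y) < \delta/2$ and $d_{A'}(z, x) < \delta/2$ to deduce $d(z, y) < \delta$ there; one final invocation of Proposition~\ref{prop:expansiveconstant} sharpens this to $< \varepsilon$.

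The main obstacle I anticipate is the forward direction's patching argument: one must verify rigorously that single-pair TMP applications at distinct $v$ really do not interfere, which relies on the strength of $v_1 \notin FAv_2$, and that a compactness limit of $z_{V'}$ satisfies all bounds uniformly in $v$ simultaneously. A secondary technical subtlety is the asymmetry in the reverse direction between enlarging $A$ (which sharpens the $x$-bound on $A$) and shrinking it (which would sharpen the $y$-bound on $G \setminus A$); the triangle-inequality estimate on the annular region $A' \setminus A$ is exactly what closes this gap.
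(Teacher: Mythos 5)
The central difficulty of Lemma~\ref{lem:sTMP} is not the change of scale (which is indeed handled exactly as in Lemma~\ref{lem:POTP}) but the passage from the \emph{single-pair} strong topological Markov property to the \emph{multi-site} statement on the right-hand side, which is essentially the uniform strong topological Markov property at the fixed scale $\delta/2$. Your forward direction does not close this gap. Gluing ``one $v$ at a time'' fails because each application of the single-pair property to attach $x_{v_i}$ to the current point $z_{i-1}$ over $Av_i$ only guarantees $d_{G\setminus Av_i}(z_i,z_{i-1})<\varepsilon$; it does not leave $z_{i-1}$ untouched off $Av_i$. After $n$ steps a point $g\in G\setminus AV'$ may drift by $n\varepsilon$ from $y$, and the block $Av_j$ treated at step $j$ may drift by $(n-j)\varepsilon$ from $x_{v_j}$; since the target $\delta/2$ is fixed while $|V'|$ is unbounded (and $V$ may be infinite), the net of approximants $z_{V'}$ need not satisfy any of the required bounds in the limit. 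The upgrade via Proposition~\ref{prop:expansiveconstant} does not repair this: converting ``$d\le\delta$ on $D_\gamma E$'' into ``$d<\gamma$ on $E$'' consumes one layer $D_\gamma$ of enlargement per step, so an induction phrased this way degrades by a layer at each stage, while the hypothesis supplies only the single fixed collar $(FA\setminus A)v$. Separately, your disjointness claim is false: $v_1\notin FAv_2$ does not imply $FAv_1\cap FAv_2=\emptyset$ (in $\Int$ take $F=\{-1,0,1\}$, $A=\{0\}$, $v_1=0$, $v_2=2$; then $v_1\notin FAv_2=\{1,2,3\}$ yet $1\in FAv_1\cap FAv_2$); pairwise disjointness of the $FAv$ would require $v_1\notin A^{-1}F^2Av_2$. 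What is missing is the actual mechanism --- using expansiveness to show that each successive gluing moves the point at a site far from the current block by an amount that can be summed, exploiting that the blocks $Av$ are spread out --- and this is the real content of the lemma.

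The reverse direction is nearly right but the last step fails as written. With $\gamma:=\delta/2$, the triangle inequality on the annulus $A'\setminus A$ gives only $d(gz,gy)<\delta/2+\delta/2=\delta$, and Proposition~\ref{prop:expansiveconstant} cannot sharpen this to $<\varepsilon$: that would require $d(hz,hy)\le\delta$ for all $h\in D_\varepsilon g$, and for $g$ adjacent to $A$ the set $D_\varepsilon g$ meets $A$ itself, where the hypothesis $d_{SA\setminus A}(x,y)<\gamma$ gives no control on $d(x,y)$ and hence none on $d(z,y)$. The repair is to let $\gamma$ depend on $\varepsilon$, say $\gamma:=\min\{\delta/2,\varepsilon/2\}$: first upgrade $d_{A'}(z,x)<\delta/2$ to $d_{D_{\varepsilon/2}A}(z,x)<\varepsilon/2$ (legitimate, because the required enlargement stays inside $SA$, where $d(x,y)<\gamma$ is known), and then bound $d(gz,gy)\le d(gz,gx)+d(gx,gy)<\varepsilon/2+\varepsilon/2$ on the annulus directly, never invoking expansiveness at points of $A$.
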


\begin{remark}
	If the expansiveness constant $\delta>0$ of a $G$-space $X$ is fixed and clear from the context, we can then the finite set $F\subseteq G$ from the statement of Lemma~\ref{lem:POTP} call a \emph{pseudoorbit tracing set}. It also follows from Lemma~\ref{lem:POTP} that if $X$ has the POTP and $F\subseteq G$ is a pseudoorbit tracing set for $X$, then if $(x_G)_{g\in G}\subseteq X$ is a pseudoorbit for $D_\varepsilon\cdot F$, meaning that $d(tx_g,x_{tg})\leq \delta/2$ for every $g\in G$ and $t\in D_\varepsilon\cdot F$, then there is $z\in X$ satisfying $d(gz,x_g)\leq \varepsilon$ for every $g\in G$.
	
	Analogously, we can call the finite set $F\subseteq G$ from Lemma~\ref{lem:WSP} a \emph{weak specification set}. If we need the weak specification with particular $\varepsilon>0$, the weak specification set $F$ can be replaced by $D_\varepsilon\cdot F$.
	
	Moreover, we can call the finite set $F\subseteq G$ from Lemma~\ref{lem:sTMP} a \emph{strong topological Markov set}. If we need this property with a specific $\varepsilon$, the strong topological Markov set $F$ can be replaced by $D_\varepsilon\cdot F$.
\end{remark}

\section{Garden of Eden theorem}
In this section, we prove one of the two main results of the paper and we establish the Garden of Eden theorem for continuous and expansive actions of amenable groups on compact metrizable spaces satisfying the strong topological Markov and weak specification properties.

We shall need the definition of a tiling of a group.
\begin{definition}
Let $G$ be a group and $A\subseteq B\subseteq G$ be two finite subsets. We say that a subset $T\subseteq G$ is a \emph{$(A,B)$-tiling of $G$} if 
\begin{itemize}
	\item for $s\neq t\in T$, the sets $A\cdot t$ and $A\cdot s$ are disjoint and,
	\item $\bigcup_{t\in T} B\cdot t=G$.
\end{itemize}

\end{definition}
\begin{theorem}\label{thm:GOE}
	Let $G$ be a countable amenable group and $X$ an expansive and compact $G$-metric space satisfying the weak specification property and the strong topological Markov property. Then $X$ satisfies the Moore-Myhill property.
\end{theorem}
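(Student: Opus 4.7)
The Garden of Eden property is the conjunction of two implications: the Myhill direction (pre-injective implies surjective), which is Li's theorem and will be re-proved in a subsequent section, and the Moore direction (surjective implies pre-injective), which is the focus of this plan. Assume for contradiction that $\tau\colon X\to X$ is continuous, $G$-equivariant, surjective, but not pre-injective: then there exist distinct homoclinic points $x,y\in X$ with $\tau(x)=\tau(y)$, and by Corollary~\ref{cor:Deltahomoclinic} the set $E:=\Delta_{\delta/4}(x,y)$ is finite, so the pair $(x,y)$ plays the role of mutually erasable patterns supported on $E$. I fix a finite memory set $M$ for $\tau$ (Lemma~\ref{lem:memoryset}), a strong topological Markov set $F_{\mathrm{sTMP}}$ (Lemma~\ref{lem:sTMP}), and a weak specification set $F_{\mathrm{WSP}}$ (Lemma~\ref{lem:WSP}).

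For $\eta>0$ sufficiently small (so that $y\notin Y$ but $\eta$ still dominates the surgery resolutions below), introduce the closed, $G$-invariant, proper subset
\[Y:=\{z\in X: d(hz,y)\geq\eta \text{ for all } h\in G\}.\]
My strategy is to derive a contradiction from two entropy estimates: on the one hand, $\tau|_Y\colon Y\to X$ will turn out to be a surjective factor map between amenable actions, forcing $h_{\mathrm{top}}(Y)\geq h_{\mathrm{top}}(X)$; on the other hand, an Ornstein--Weiss quasi-tiling argument combined with the weak specification property will give the strict inequality $h_{\mathrm{top}}(Y)<h_{\mathrm{top}}(X)$, contradicting the first estimate.

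For $\tau(Y)=X$: given $w\in X$, start from any $z\in\tau^{-1}(w)$ and surgically replace every translate $hz$ that is $\eta$-close to $y$ by the corresponding translate of $x$ via Lemma~\ref{lem:sTMP}. Homoclinicity of $x\sim y$ supplies the boundary compatibility across the buffer region $F_{\mathrm{sTMP}}\cdot E\setminus E$, while the memory set $M$ ensures that the modified point $z'$ satisfies $\tau(z')=w$ off an arbitrarily small boundary of any prescribed finite window; a compactness limit then produces $z'\in Y$ with $\tau(z')=w$. For the strict entropy drop, Ornstein--Weiss $\varepsilon$-quasi-tile each F\o lner set $F_n$ by translates of a large F\o lner $A$ absorbing $E$, $M$, $F_{\mathrm{sTMP}}$ and $F_{\mathrm{WSP}}$ with generous buffer, producing a centre set $T_n$ with $|T_n|/|F_n|\geq c>0$. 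Given any $(F_n,\delta)$-separated $S\subseteq Y$ and any $C\subseteq T_n$, iterated applications of Lemma~\ref{lem:WSP} furnish a point $z_{s,C}\in X$ which is $\delta/4$-close to $s\in S$ outside $\bigcup_{t\in C}At$ and $\delta/4$-close to $y$ on each tile $At$ for $t\in C$. Distinct pairs $(s,C)$ yield distinct $(F_n,\delta)$-separated points in $X\setminus Y$, whence
\[\mathrm{sep}_X(d_{F_n},\delta)\geq 2^{|T_n|}\mathrm{sep}_Y(d_{F_n},\delta),\]
and Proposition~\ref{prop:entropyexpansive} converts this to $h_{\mathrm{top}}(X)\geq h_{\mathrm{top}}(Y)+c\log 2$, the desired strict inequality.

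The main technical obstacle is the surgery step underlying $\tau(Y)=X$: Lemma~\ref{lem:sTMP} as stated treats only a single window $A$, whereas one must simultaneously swap out every $\eta$-close translate of $y$ occurring in $z$, of which there may be infinitely many. A cleaner route is to argue that, in the expansive setting, strong topological Markov together with the remaining hypotheses upgrades automatically to the uniform strong topological Markov property of \cite{BaFRLiarxiv}, which handles all mutually disjoint windows at once; alternatively one iterates single-window STMP along a nested exhaustion of finite swap regions and extracts a convergent subsequence via compactness of $X$, controlling the accumulation of error by choosing the STMP resolutions to form a summable sequence.
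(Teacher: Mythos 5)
Your overall strategy---exhibit a subset of $X$ that still surjects onto $X$ under $\tau$ yet has strictly smaller entropy---is the same as the paper's, but the specific construction has a gap that your own closing paragraph identifies without resolving. You define the $G$-invariant set $Y=\{z\in X\colon d(hz,y)\ge\eta \text{ for all } h\in G\}$ and must surgically remove every occurrence of the $y$-pattern from an arbitrary preimage $z\in\tau^{-1}(w)$. The obstruction is not merely that there may be infinitely many such occurrences: the set $H=\{h\colon d(hz,y)<\eta\}$ can contain elements arbitrarily close to one another in $G$, so the surgery windows overlap and accumulate. The uniform strong topological Markov property only glues along translates $A\cdot v$ whose $S$-neighbourhoods are pairwise disjoint, so it does not apply to $H$; and iterating single-window STMP along an exhaustion with summable errors does not help either, because overlapping windows force incompatible replacements, not just accumulating errors. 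The paper sidesteps this by fixing an $(E^2,E^4)$-tiling $T$ of $G$ (via \cite[Proposition 5.6.3]{CSCo-book}) and forbidding the $x$-pattern only on the pairwise disjoint windows $E^2t$, $t\in T$: the relevant set is $Z=\{z\colon \forall t\in T,\ d_{E^2t}(z,t^{-1}x)\ge\delta/8\}$, and the surgery happens only at tile centres, so disjointness is automatic. Note also that membership in $Z$ is a sup-over-a-window condition, hence stable under the $\delta/8$-perturbations the gluing produces (one witness per window suffices, supplied by $d_{Et}(t^{-1}x,t^{-1}y)>\delta$), whereas membership in your $Y$ demands $d(hz',y)\ge\eta$ at every single $h\in G$; after an approximate surgery the new point only satisfies $d(hz',y)\ge\eta-\delta/2$ off the modified region, so it need not lie in $Y$ at all. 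The set $Z$ is not $G$-invariant, but the entropy estimates never require invariance.

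A second, smaller gap is the counting inequality $\sep_X(d_{F_n},\delta)\ge 2^{|T_n|}\sep_Y(d_{F_n},\delta)$. The points $z_{s,C}$ are only $\delta/4$-close to the data they interpolate, so two of them inherit at best $\delta/2$-separation from $s\neq s'$, and only when the separating group element avoids the modified tiles; moreover, distinguishing $z_{s,C}$ from $z_{s,C'}$ on a tile in $C\mathbin{\triangle}C'$ requires the pattern of $s$ on that tile to be $\delta$-far from the planted $y$-pattern in the $d_{At}$ sense, which does not follow from $s\in Y$ at resolution $\delta$ (membership in $Y$ only yields $\eta$-type lower bounds at single positions). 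These resolution losses can in principle be repaired using Proposition~\ref{prop:expansiveconstant} together with Proposition~\ref{prop:entropyexpansive}, but as written the displayed inequality is not justified. The paper instead proves the entropy drop by imposing the tile constraints one at a time and showing, via the weak specification property, that each constraint removes a fixed proportion $\Sigma=1/\sep(X,E,\delta/2)$ of the separated set, giving the factor $(1-\Sigma)^{|T_n|}$ with $|T_n|\ge K|F_n|$.
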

\begin{proof}
	Fix an expansiveness constant $\delta>0$.
	Let us show the Moore property. Let $\tau:X\rightarrow X$ be a surjective continuous $G$-equivariant map. We show it is pre-injective. Suppose the contrary and choose $x\sim y\in X$ satisfying $\tau(x)=\tau(y)$. Set $\Delta:=\Delta_\delta(x,y)$, which is by Corollary~\ref{cor:Deltahomoclinic} finite and we may assume it is non-empty and contains $1_G$. Also, let $S'\subseteq G$ be a finite symmetric memory set for $\tau$. Finally, let $S\subseteq G$ be a finite symmetric subset containing $1_G$ such that
	\begin{enumerate}
		\item $S$ is a weak specification and strong topological Markov set for $X$;
		\item $(S')^2\subseteq S$;
		\item $\forall u\in X\forall g\in  S'\cdot D_{\delta/8}\cdot\Delta\; (d_S(gu,gy)\leq \delta\Rightarrow d(\tau(gu),\tau(gx))\leq\delta/2).$
	\end{enumerate}
	
	Now set $E:=D_{\delta/8}\cdot S\cdot D_{\delta/8}\cdot\Delta$. Since all of the finite sets involved in the product may be assumed to be symmetric and containing $1_G$, we expect $E$ to satisfy the same. By \cite[Proposition 5.6.3]{CSCo-book} there exists an $(E^2,E^4)$-tiling of $G$, denoted by $T$. Let $Z$ be the closed (not necessarily $G$-invariant) subset \[\{z\in X\colon \forall t\in T\; (d_{E^2 t}( z, t^{-1}x)\geq \delta/8)\}.\]

	\begin{claim}\label{claim:GOE1}
	We have $\tau[Z]=\tau[X]$.
	\end{claim}
	That is, for every $w\in X$ there exists $z\in Z$ satisfying $\tau(w)=\tau(z)$. Fix $w\in X$. What follows is the proof of the existence of such $z$ that will be divided into three steps. \medskip
	
	\noindent\textbf{Step 1.} Finding $z$ using the strong topological Markov property.\\ \\

	We define a $T$-indexed set $(z_t)_{t\in T}$ in the following way. For $t\in T$ we set
	\[z_t:=\begin{cases}t^{-1}y & \text{if }d_{E^2 t}(w,t^{-1}x)<\delta/4,\\
	w & \text{otherwise.}
	
	\end{cases}\]
	We want to use the strong topological Markov property with the constant $\delta/8$, thus with the set $P:=D_{\delta/8}\cdot S$, for $A=E$, for $V=T$, where the $T$-indexed set is $(z_t)_{t\in T}$, and for $y=w$. Since $T$ is a $(E^2,E^4)$-tiling and $P\subseteq E$, we have that $P\cdot E\cdot t_1\cap P\cdot E\cdot t_2=\emptyset$, for every $t_1\neq t_2\in T$. So we need to check that for every $t\in T$ we have $d_{(PE\setminus E)t}(z_t,w)<\delta/2$. Fix $t\in T$. If $d_{E^2 t}(w,t^{-1}x)\geq \delta/4$, then $z_t=w$ and the inequality $d_{(PE\setminus E)t}(z_t,w)<\delta/2$ is obvious. So suppose that $d_{E^2 t}(w,t^{-1}x)<\delta/4$ and therefore $z_t=t^{-1}y$. Since $d_{G\setminus D_{\delta/8}\cdot\Delta}(x,y)<\delta/8$ and $D_{\delta/8}\cdot\Delta\subseteq E$ we have $d_{(PE\setminus E)t}(t^{-1}x,t^{-1}y)<\delta/8$. Since $P\subseteq E$ we also have by the assumption $d_{(PE\setminus E)t} (w,t^{-1}x)<\delta/4$, so by the triangle inequality, we get $d_{(PE\setminus E)t}(z_t,w)<\delta/2$ as desired.
	
	By strong topological Markov property, it follows that there exists $z\in X$ such that $d_{Et}(z,z_t)<\delta/2$, for all $t\in T$, and $d_{G\setminus FT}(z,w)<\delta/2$.\medskip

	\noindent\textbf{Step 2.} We claim that $z\in Z$.\\ \\
	Suppose the contrary. Then there is $t\in T$ such that $d_{E^2 t}(z,t^{-1}x)<\delta/8$. Either we have $z_t=w$, if $d_{E^2t}(w,t^{-1}x)\geq \delta/4$, in which case $d_{E^2t}(z,w)\leq \delta/8$, so by the triangle inequality $d_{E^2t}(z,t^{-1}x)\geq \delta/8$, a contradiction.
	
	Or we have $z_t=t^{-1}y$ if $d_{E^2t}(w,t^{-1}x)<\delta/4$, in which case $d_{Et}(z,t^{-1}y)\leq \delta/8$. Since $d_{Et}(t^{-1}x,t^{-1}y)>\delta$, by the triangle inequality we have \[\delta/2<d_{Et}(z,t^{-1}x)\leq d_{E^2t}(z,t^{-1}x),\] another contradiction.\medskip
	
	\if0 Since $z$ $\delta/8$-shadows the pseudoorbit $(z_g)_g$ we get, by triangle inequality, that for every $e\in E^2$, $d(z_{et},ex)<\delta/4$. By definition of $(z_g)_g$, we have that either for all $e\in E^2$, $z_{et}=(et)w$, or for all $e\in E$, $z_{et}=ey$. We show that both options lead to contradiction. In the former case, we would have $d_{E^2 t}(w,t^{-1}x)<\delta/4$, but then by definition $z_{et}=ey$ for all $e\in E$, which is in contradiction with the fact that $d_E(x,y)>\delta$. The latter case is also a contradiction with the fact that $d_E(x,y)>\delta$. So to conclude, $z\in Z$.\medskip\fi
	
	\noindent\textbf{Step 3.} We claim that $\tau(z)=\tau(w)$.\\ \\
	Suppose again the contrary. Then there exists $g\in G$ such that \[d(g\tau(w),g\tau(z))=d(\tau(gw),\tau(gz))>\delta.\] Define $V\subseteq T$ to be the set $\{t\in T\colon d_{E^2 t}(w,t^{-1}x)<\delta/4\}$. We have two cases. Either there exists $t\in V$ such that $S'g\cap Et\neq\emptyset$, or not. In the latter case, we have that for every $s\in S'$ \[d((sg)z,(sg)w)\leq  \delta/8,\] thus since $S'$ is a memory set for $\tau$, we get $d(\tau(gz),\tau(gw))\leq\delta$, a contradiction.
	
	So suppose that there is $t\in V$ such that $S'g\cap Et\neq\emptyset$. Then we distinguish three more subscases:
	\begin{itemize}
		\item either $S'g\subseteq Et$, however, $S'g\cap  D_{\delta/8}\cdot\Delta t=\emptyset$;
		\item or $S'g\cap  D_{\delta/8}\cdot\Delta t\neq\emptyset$;
		\item or $S'g\nsubseteq Et$.
	\end{itemize}
	In the first case, we have for every $s\in S'$ \[d((sg)z,(sgt^{-1})y)\leq \delta/8,\]and since $S'g\cap  D_{\delta/8}\cdot\Delta t=\emptyset$, we also have for every $s\in S'$ \[d((sgt^{-1})y,(sgt^{-1})x)<\delta/8),\]so for every $s\in S'$ we have \[d((sg)z,(sgt^{-1})x)<\delta/4.\] Combining this with the fact that, since $t\in V$, that for every $s\in S'$ \[d((sg)w,(sgt^{-1})x)<\delta/4,\] we finally obtain that for all $s\in S'$ \[d((sg)z,(sg)w)<\delta/2,\] thus, since $S'$ is a memory set for $\tau$, $d(\tau(gz),\tau(gw))\leq\delta$, again a contradiction.
	
	In the second case, we have $g\in S'\cdot D_{\delta/8}\cdot\Delta t$. Then, since $S\cdot g\subseteq Et$, we have \[d_S(gz,gt^{-1} y)\leq \delta,\] since for every $s\in S$ we have $d((sg)z,(sgt^{-1})y)\leq \delta/8$, so by $(3)$ we get \[d(\tau(gz),\tau(gx))\leq \delta/2.\] On the other hand, since $d_{D_{\delta/8}\cdot S'}(gw,(gt^{-1})x)<\delta/4$ and $S'$ is a memory set for $\tau$ we get \[d(\tau(gw),\tau(gx))\leq \delta/8.\] So finally, by triangle inequality, we get \[d(\tau(gz),\tau(gw))\leq \delta,\] which is again a contradiction.
	
	For the last third case, pick $s\in S'$. If $sg\notin Et$, then $d((sg)z,(sg)w)\leq \delta/8$. If $sg\in Et$, then we have $d((sg)z,(sgt^{-1})y)\leq\delta/8$. On the other hand, since there is by assumption $s'\in S'$ such that $s'g\notin Et$ we have $sg\notin D_{\delta/8}\cdot\Delta t$ since otherwise $s's^{-1}sg\in Et$ as $s's^{-1}\in S$ and $S\cdot D_{\delta8}\cdot\Delta\subseteq E$. It follows that $d((sgt^{-1})y,(sgt^{-1})x)<\delta/8$ and so by the triangle inequality \[d((sg)z,(sgt^{-1})x)<\delta/4.\] Combining this with the fact that $d((sg)w,(sgt^{-1})x)<\delta/4$ we finally obtain by another application of the triangle inequality that \[d((sg)z,(sg)w)<\delta/2.\]
	So again, since $S$ is a memory set for $\tau$ we get $d(\tau(gz),\tau(gw))\leq\delta$, a contradiction. This finishes the proof of Claim~\ref{claim:GOE1}.
	
	\begin{claim}\label{claim:GOE2}
	We have $h_\mathrm{top}(Z)<h_\mathrm{top}(X)$.	
	\end{claim}
	
	First notice that for every closed subset $Y\subseteq X$, $\gamma>0$ and every $E\subseteq F\subseteq G$, where $E,F$ are finite, we have 
	\begin{equation}\label{eq:nets}
	\sep(Y,\gamma,F)\leq \sep(Y,\gamma,F\setminus E)\cdot \sep(Y,\gamma/2,E).
	\end{equation}
	Indeed, suppose the contrary and let $N\subseteq Y$ be a $\gamma$-separated subset of $(Y,d_F)$ of size strictly bigger than $\sep(Y,\gamma,F\setminus E)\cdot \sep(Y,\gamma/2,E)$. Let $M\subseteq N$ be a maximal $\gamma/2$-separated subset of $N$ with respect to $d_E$. Since $|N|>\sep(Y,\delta,F\setminus E)\cdot \sep(Y,\delta/2,E)$, while $|M|\leq \sep(Y,\gamma/2,E)$ there must exists $m\in M$ and set $A\subseteq N$ of size strictly bigger than $\sep(Y,\gamma, F\setminus E)$ such that for every $a\in A$ we have $d_E(a,m)\leq \gamma/2$. It follows, by triangle inequality, that for every $a,b\in A$ we have $d_E(a,b)\leq \gamma$. On the other hand, for every $a\neq b\in A$ we have $d_F(a,b)>\gamma$, so there must be $e\in F\setminus E$ such that $d(ea,eb)>\gamma$. It follows that $A$ is $\gamma$-separated for $d_{F\setminus E}$ which contradicts that $|A|>\sep(Y,\gamma, F\setminus E)$.
	
	Now fix $j\in\Nat$. Enumerate $T_j$ as $\{t_1,\ldots,t_n\}$. For every $1\leq i\leq n$ define \[Z_i:=\{z\in X\colon \forall j\leq i\; (d_{E^2 t_j}(z,t_j^{-1} x)\geq \delta/8)\}.\] Moreover, set $Z_0:=X$. Next we claim that for every $1\leq i\leq n$ we have
	\begin{equation}\label{eq:sepinequality}
	\sep(Z_i,F_j,\delta/4)\leq \sep(Z_{i-1},F_j,\delta/4)-\sep(Z_{i-1},F_j,\delta/4)\cdot \Sigma_{i-1},
	\end{equation}
	where $\Sigma_{i-1}:=1/\sep(Z_{i-1},E t_i,\delta/2)$. To show that, pick an arbitrary $\delta/2$-separated set $N\subseteq Z_{i-1}$ with respect to $d_{F_j\setminus E t_i}$. By the weak specification property (recall that $S$ is a weak specification set) and since $E$ contains $D_{\delta/8}\cdot S\cdot\Delta$ for each $w\in N$ we can find $w_x\in X$ such that
	\begin{itemize}
		\item for every $e\in F_j\setminus E t_i$ we have $d(e w, e w_x)<\delta/8$;
		\item for every $g\in\Delta t_i$ we have $d((g t_i^{-1}) x, g w_x)<\delta/8$.
	\end{itemize}
	In particular, it follows that $N_x:=\{w_x\colon w\in N\}\subseteq \big((X\setminus Z_i)\cap Z_{i-1}\big)$ and that $N_x$ is $\delta/4$-separated with respect to $d_{F_j\setminus E t_i}$. Since by \eqref{eq:nets} we have $\sep(Z_{i-1}, F_j\setminus E t_i,\delta/2)\geq \sep(Z_{i-1},F_j,\delta/4)\cdot\Sigma_i$ we obtain \[\begin{split}\sep(Z_i,F_j,\delta/4) & \leq \sep(Z_{i-1},F_j,\delta/4)-\sep(Z_{i-1},F_j\setminus E t_i,\delta/2)\\ & \leq \sep(Z_{i-1},F_j,\delta/4)-\sep(Z_{i-1},F_j,\delta/4)\cdot\Sigma_i),\end{split}\] which verifies \eqref{eq:sepinequality}.
	
	If we show that for every $i< n$ we have $\Sigma_i\leq \Sigma:=\sep(X,E,\delta/2)$, then we get from \eqref{eq:sepinequality} that \[\begin{split}\sep(Z,F_j,\delta/4) & \leq \sep(Z_n, F_j,\delta/4) \leq \prod_{i=0}^{n-1}(1-\Sigma_i)\sep(Z_0,F_j,\delta/4)\\ & \leq (1-\Sigma)^{|T_j|}\sep(X,F_j,\delta/4).\end{split}\]
	But that is clear since for every $i< n$, if $M\subseteq Z_i$ is a $\delta/2$-separated set with respect to $d_{E t_{i+1}}$, then $t_{i+1}^{-1} M$ is a $\delta/2$-separated set in $X$ with respect to $d_E$.\medskip
	
	Finally, using that for all sufficiently large $n\in\Nat$ we have by \cite[Proposition 5.6.4]{CSCo-book} $|T_n|\geq K|F_n|$, for some constant $K>0$, we get \[\begin{split}\log \sep(Z,d_{F_n},\delta/4) & \leq \log (1-\Sigma)^{|T_n|}\sep(X,F_n,\delta/4)\\ & =|T_n|\log (1-\Sigma)+\log \sep(X,F_n,\delta/4)\\ & \leq K|F_n|\log(1-\Sigma)+ \log \sep(X,F_n,\delta/4),\end{split}\] 
	
	therefore we get 
	
	\[\begin{split}h_\mathrm{top}(Z,G) & =h_\sep(Z, \delta/4,d)=\limsup_{n\to\infty} \frac{1}{|F_n|}\log \sep(Z,d_{F_n},\delta/4)\\ & \leq \limsup_{n\to\infty} \frac{1}{|F_n|}\Big(K|F_n|\log(1-\Sigma)+ \log \sep(X,F_n,\delta/4)\Big)\\ & = \limsup_{n\to\infty} \Big(K\log(1-\Sigma)+ \frac{1}{|F_n|}(\log \sep(X,F_n,\delta/4))\Big)\\ & = K\log(1-\Sigma)+h_\sep(X,\delta/4,d)\\ & < h_\mathrm{top}(X,G),\end{split}\] which finishes the proof of Claim~\ref{claim:GOE2}.
	
	\begin{claim}\label{claim:GOE3}
	We have $h_\mathrm{top}(\tau[X])<h_\mathrm{top}(X)$.
	\end{claim}
	
	Since we already proved that $h_\mathrm{top}(Z)<h_\mathrm{top}(X)$ and that $\tau[Z]=\tau[X]$ it is enough to show that $h_\mathrm{top}(\tau[Z])\leq h_\mathrm{top}(Z)$. By Proposition~\ref{prop:entropyexpansive}, it is enough to show that $h_\sep(\tau[Z],\delta,d)\leq h_\sep(Z,\delta,d)$. Let $N\subseteq \tau[Z]$ be a $\delta$-separated set with respect to $d_F$ for some $F\subseteq G$. For each $x\in N$ let $x'\in Z$ be any element from $\tau^{-1}(x)$ and let $N':=\{x'\colon x\in N\}\subseteq Z$. We claim that $N'$ is $\delta$-separated for $d_{S\cdot F}$. Indeed, this follows since $S$ is a memory set for $\tau$ as follows. If $x\neq y\in N$, then there is $e\in F$ such that $d(ex,ey)>\delta$. If there were not $s\in S$ such that $d((se)x',(se)y')>\delta$, then by the definition of a memory set and the fact that $S$ is symmetric we would have that $d(ex,ey)\leq \delta$, a contradiction. It follows that \[\begin{split}h_\sep(Z,\delta,d) & = \limsup_{n\to\infty} \frac{1}{|S\cdot F_n|} \log \sep(Z,d_{S\cdot F_n},\delta)\\ & \leq \limsup_{n\to\infty} \frac{1}{|F_n|} \log \sep(Z,d_{S\cdot F_n},\delta)\\ & \leq \limsup_{n\to\infty} \frac{1}{|F_n|} \log \sep(\tau[Z],d_{F_n},\delta)= h_\sep(\tau[Z],\delta,d),\end{split}\]
	and we are done.\bigskip
	
	For the sake of completeness, althought this has been already shown by H. Li in \cite{Li19}, let us prove the Myhill property under the weaker assumption that the $G$-space $X$ satisfies the weak specification property. We again assume that the expansiveness constant is $\delta>0$.
	
	Let $\tau:X\rightarrow X$ be a continuous $G$-equivariant map and suppose that it is not surjective. We shall show that it is not pre-injective. Let $S\subseteq G$ be a finite symmetric set containing $1_G$ such that
	\begin{enumerate}
		\item there is a memory set $S'\subseteq G$ such that $D_{\delta/4}\cdot S'\subseteq S$;
		\item there is a weak specification set $S''\subseteq G$ for $X$ such that $D_{\delta/4}\cdot S''\subseteq S$.
	\end{enumerate}
	
	Set $Y:=\tau[X]\subseteq X$. Since $Y\neq X$, arguing exactly as in the proof of Claim~\ref{claim:GOE2} and using Proposition~\ref{prop:entropyexpansive} we get that $h_{\rm{top}}(Y,G)<h_{\rm{top}}(X,G)$ and that moreover, for a given F\o lner sequence $(F_n)_n$ (resp. net in case $G$ is uncountable) there exists $j\in\Nat$ such that \[\sep(Y,S^2\cdot F_j,\delta)<\sep(X,F_j,\delta).\]
	Fix $x\in X$. Let $X_0$ be the set $\{z\in X\colon d_{G\setminus S\cdot F_j}(x,z)\leq \delta\}$. By Corollary~\ref{cor:Deltahomoclinic}, $X_0\subseteq [x]_\sim$. Let $N\subseteq X$ be a $\delta$-separated set with respect to $d_{F_j}$ of size $\sep(X,F_j,\delta)$. Using the weak specification property with respect to $D_{\delta/4}\cdot S''\subseteq S$, for each $y\in N$ we can find $z_y\in X_0$ such that $d_{F_j}(y,z_j)\leq \delta/4$. In particular, we get \[\sep(X_0,F_j,\delta/2)\geq \sep(X,F_j,\delta).\]
	Since $S$ is also a memory set for $\tau$ we get that for every $z\in X_0$
	\begin{equation}\label{eq:myhilleq}
	d_{G\setminus S^2\cdot F_j}(\tau(z),\tau(x))\leq \delta.
	\end{equation}
	Therefore, as $\sep(X_0,F_j,\delta/2)>\sep(Y,S^2\cdot F_j,\delta)$ there must exist $y,y'\in X_0$ such that $d_{F_j}(y,y')>\delta/2$, in particular $y\neq y'$, yet $d_{S^2\cdot F_j}(\tau(y),\tau(y'))\leq \delta$. Combining with \eqref{eq:myhilleq}, we have that for every $g\in G$ \[d(g\tau(y),g\tau(y'))\leq \delta,\] thus $\tau(y)=\tau(y')$ by expansiveness. Since $y,y'\in X_0\subseteq [x]_\sim$, we have $y\sim y'$ and we are done.
\end{proof}

Both Moore and Myhill properties make sense to be investigated only for expansive actions of amenable groups. Indeed, it was shown by Bartholdi in \cite{Ba10}, resp. in \cite{Ba19} that the Moore, resp. Myhill property fails already for a full topological shift of any non-amenable group. However, there is a well-known weakening of the Myhill property, called \emph{surjunctivity}, which asks whether an injective continuous $G$-equivariant map of some dynamical system is surjective. It is a famous conjecture of Gottschalk from \cite{Gott} whether for every group $G$ and finite alphabet $A$, the full shift $A^G$ is surjunctive. This conjecture has received a considerable attention especially since Gromov's introduction of sofic groups and his proof that such groups do satisfy Gottschalk's conjecture (see \cite{Grom}). A very recent result of Ceccherini-Silberstein, Cornaert, and Li establishes surjunctivity of expansive actions of sofic groups on compact metrizable spaces having the weak specifiaction and strong topological Markov properties, and having non-negative sofic entropy (see \cite{CSCoLi21}).

It is only very recently when an analogous weakening has been introduced also for the Moore property. This is the \emph{dual surjunctivity} of Capobianco, Kari, and Taati from \cite{CaKaTa} (see also the recent \cite{DoGi} for additonal results, including those concerning more general expansive dynamical systems), which has been also proved for full shifts of sofic groups. With respect to the recent result \cite{CSCoLi21}, it is natural to ask whether all expansive actions of sofic groups on compact metrizable spaces satisfying certain additional properties, as in \cite{CSCoLi21}, are dual surjunctive.

\subsection{Beyond countable groups and metrizable spaces}\label{subsect:nonmetrizable}
Although Theorem~\ref{thm:GOE} was stated for countable groups, one can check that the proof can be easily adapted to handle the uncountable case as well, replacing the limits of sequences by limits of nets - in appropriate places. Here we hint how to remove the assumption of metrizability of the compact space $X$ from Theorem~\ref{thm:GOE}. First, we state the following general definition of expansiveness of an action, not using any metric. It is an exercise that this coincides with Definition~\ref{def:expansiveness} when $X$ is equipped with a compatible metric.
\begin{definition}\label{def:generalexpansiveness}
Let $G$ be an arbitrary group acting continuously on a compact topological space $X$. The action is \emph{expansive} if there exists a finite open cover $(U_i)_{i=1}^n$ of $X$ such that the sets $\{gU_i\colon g\in G, i\leq n\}$ form a subbase of the topology of $X$.
\end{definition}
A general definition of expansive actions of groups on uniform spaces was given in \cite{CSCo11}. It is again an exercies that it coincides with the definition above.\medskip

The following are well known fact from general topology (we shall refer to the book \cite{Engelking}).
\begin{itemize}
	\item The topology of $X$ is induced by a unique uniform structure $\UU$ (see e.g. \cite[Theorem 8.3.13]{Engelking}).
	\item For every $U\in\UU$ there is a uniformly continuous pseudometric $\rho$ on $X$ such that $\rho(x,y)<1$ for every $(x,y)\in U$ (see e.g. \cite[Corollary 8.1.11]{Engelking}).
	\item For every two uniformly continuous pseudometrics $\rho_1$ and $\rho_2$ on $X$, also $\max \{\rho_,\rho_2\}$ is a uniformly continuous pseudometric (this is obvious).
\end{itemize}

Since the finite open cover from Definition~\ref{def:generalexpansiveness} can be replaced by any finite open subcover, it follows from the general facts above that if a group $G$ acts expansively on a compact space $X$ then there exists a single uniformly continuous pseudometric $\rho$ on $X$ and a constant $\delta>0$ such that for every $x\neq y$ there is $g\in G$ so that $\rho(gx,gy)>\delta$.

If $\tau:X\rightarrow X$ is continuous and $G$-equivariant, Definition~\ref{def:memoryset} and Lemma~\ref{lem:memoryset} are adapted in a straightforward way to this pseudometric $\rho$. The topological entropy of the action on $X$ can be also computed using this pseudometric $\rho$. This follows from the fact that $\rho$ is dynamically generating in the sense of \cite[Definition 9.35]{KeLi-book} and then applying \cite[Theorem 9.38]{KeLi-book}. Note that the argument there is for metrizable compact space, however can be also used for non-metrizable spaces by replacing the genuine metric from \cite[Lemma 9.37]{KeLi-book} by the pseudometric $\max\{\rho,\rho'\}$, where $\rho'$ is a uniformly continuous pseudometric generating the open sets of the finite open cover (which varies) from \cite[Lemma 9.37]{KeLi-book}.

The other notions such as the strong topological Markov property, the weak specification property and the homoclinic relation are defined as in the metrizable case with the exception that their definition starts with the quantifier ``for every uniformly continuous pseudometric $\lambda$" and the rest of the definition is with respect to $\lambda$. The set $\Delta_\delta(x,y)$ is defined with respect to the distinguished pseudometric $\rho$ and Corollary~\ref{cor:Deltahomoclinic} is still valid.

We can then repeat verbatim the whole proof of Theorem~\ref{thm:GOE}, using the pseudometric $\rho$ instead of the metric $d$.

\section{Weakly periodic points}
The goal of this section is to prove the second main result, that certain expansive actions of finitely generated groups with at least two ends have weakly periodic points, generalizing the result from \cite{Cohen}.

We shall need another variant of topological Markov property, different from those introduced in \cite{BaFRLiarxiv}, although it may turn out it is equivalent to one of them. We formulate the following definition only for expansive actions, since our work is limited to them, however a straightforward modification allows to formulate it generally.
\begin{definition}
	Let a group $G$ act continuously and expansively on a compact metric space $X$ with an expansiveness constant $\delta>0$. We say that the action has \emph{the cover strong topological Markov property} if there is a finite symmetric set $S\subseteq G$ containg the unit such that for every cover $(F_n)_{n\in\Nat}$ of $G$ by disjoint, not necessarily finite, sets and for every sequence of elements $(x_n)_{n\in\Nat}\subseteq X$ satisfying that for every $n\neq m\in\Nat$ and every $g\in S\cdot F_n\cap F_m$, \[d(gx_n,gx_m)\leq \delta/2,\] then there exists $z\in X$ with the property that for every $n$ and $g\in F_n$ we have \[d(gz,gx_n)\leq \delta/2.\]
\end{definition}
The next result says that this topological Markov property lies, in principle, somewhere in between the pseudo-orbit tracing property and the (uniform) strong topological Markov property, but we conjecture that it might be actually equivalent to the latter.
\begin{proposition}
Let a group $G$ act continuously and expansively on a compact metric space $X$ with an expansiveness constant $\delta>0$. If the action has the pseudo-orbit tracing property, then it has the cover strong topological Markov property, and if it has the latter, then it has the uniform strong topological Markov property.
\end{proposition}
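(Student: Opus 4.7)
The plan is to prove the two implications separately. For POTP implies cover STMP, I would exploit the $\delta/2$-characterization of POTP from Lemma~\ref{lem:POTP}. Let $F$ be a pseudo-orbit tracing set and define the candidate cover STMP set $S := F \cup F^{-1} \cup \{1_G\}$. Given a disjoint cover $(F_n)_{n\in\Nat}$ of $G$ and elements $(x_n)_n \subseteq X$ satisfying the cover STMP compatibility hypothesis, define the $G$-indexed family $y_g := g \cdot x_{n(g)}$, where $n(g)$ denotes the unique index with $g \in F_{n(g)}$. The family $(y_g)$ is an $F$-pseudo-orbit: for $s \in F$, if $n(g) = n(sg)$ then $d(sy_g, y_{sg}) = 0$; otherwise $sg \in F_{n(sg)}$ and $sg \in F \cdot F_{n(g)} \subseteq S \cdot F_{n(g)}$, so the hypothesis gives $d(sy_g, y_{sg}) = d(sg \cdot x_{n(g)}, sg \cdot x_{n(sg)}) \leq \delta/2$. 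POTP, in the form of Lemma~\ref{lem:POTP}, now yields $z \in X$ with $d(gz, y_g) \leq \delta/2$ for all $g$, i.e.\ $d(gz, gx_n) \leq \delta/2$ for $g \in F_n$.

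For cover STMP implies uniform STMP, let $S$ be the set from cover STMP, and first aim for $\varepsilon = \delta/2$ in uniform STMP; the general $\varepsilon > 0$ then follows by the standard expansiveness trick via Lemma~\ref{lem:keyexpansivefact}, analogous to the reductions in Lemmas~\ref{lem:POTP} and~\ref{lem:sTMP}. Using uniform continuity of the finitely many homeomorphisms induced by elements of $S$, choose $\gamma > 0$ small enough that $d(u, v) < \gamma$ implies $d(su, sv) \leq \delta/2$ for every $s \in S$ and every $u, v \in X$. Given $A, V \subseteq G$ with $S A v_1 \cap S A v_2 = \emptyset$ for distinct $v_1, v_2 \in V$, together with elements $(x_v)_{v \in V}$ and $y \in X$ satisfying $d_{(SA \setminus A)v}(x_v, y) < \gamma$ for all $v$, I apply cover STMP to the disjoint cover $F_v := Av$ for $v \in V$ and $F_\infty := G \setminus AV$, assigning $x_v$ to $F_v$ and $y$ to $F_\infty$.

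The main obstacle is verifying the cover STMP compatibility $d(gx_n, gx_m) \leq \delta/2$ on $g \in S \cdot F_n \cap F_m$. Pairs with both indices in $V$ are vacuous since $Av_2 \subseteq S A v_2$ makes $SAv_1 \cap Av_2 = \emptyset$ under the hypothesis on $V$. For the outer-boundary pair $(v, \infty)$, any $g \in SAv \cap (G \setminus AV)$ satisfies $g \notin Av$, so the factorization $g = sav$ forces $sa \in SA \setminus A$, giving $g \in (SA \setminus A)v$ and the bound by hypothesis. The subtle inner-boundary pair $(\infty, v)$ consists of $g \in S(G \setminus AV) \cap Av$: here $g = av$ lies inside $Av$ itself, yet writing $g = sh$ with $h = s^{-1}g \notin AV$ one gets $h = (s^{-1}a)v$ with $s^{-1}a \notin A$ (else $h \in Av$), hence $h \in (SA \setminus A)v$. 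The hypothesis gives $d(hx_v, hy) < \gamma$, i.e.\ $d(s^{-1}g \cdot x_v, s^{-1}g \cdot y) < \gamma$, and the choice of $\gamma$ propagates this through $s \in S$ to $d(gx_v, gy) \leq \delta/2$. Cover STMP then supplies $z$ with $d(gz, gx_v) \leq \delta/2$ on $Av$ and $d(gz, gy) \leq \delta/2$ on $G \setminus AV$, which is precisely the conclusion of uniform STMP.
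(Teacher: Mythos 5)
Your proof is correct and, for the first implication, essentially identical to the paper's: both encode the disjoint cover as the $G$-indexed family $y_g = g\cdot x_{n(g)}$, check it is a pseudo-orbit using the compatibility hypothesis on $S\cdot F_n\cap F_m$, and invoke the $\delta/2$-form of the POTP from Lemma~\ref{lem:POTP}. For the second implication you follow the same overall strategy (apply the cover property to the disjoint cover $(A\cdot v)_{v\in V}\cup\{G\setminus A\cdot V\}$), but your case analysis is more complete than the paper's. The paper verifies the compatibility condition only for ordered pairs $(B,C)$ with $C=G\setminus A\cdot V$, asserting this is the only case that can occur; it overlooks the ordered pair $B=G\setminus A\cdot V$, $C=A\cdot v$, i.e.\ points $g\in S\cdot(G\setminus A\cdot V)\cap A\cdot v$, which is in general nonempty (e.g.\ $G=\Int$, $A=V=\{0\}$, $S=\{-1,0,1\}$, $0=(-1)+1$). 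Since the cover hypothesis quantifies over ordered pairs, this case must be checked, and as you note the data $d_{(SA\setminus A)v}(x_v,y)$ controls only $(SA\setminus A)v$, which is disjoint from $A\cdot v$. Your fix --- shrinking the input tolerance to a $\gamma$ chosen by uniform continuity of the finitely many maps $u\mapsto su$, $s\in S$, so that the bound at $h=s^{-1}g\in(SA\setminus A)v$ propagates to $g=sh$ --- is exactly what is needed, and is legitimate because the definition of the uniform strong topological Markov property allows $\gamma$ to depend on $\varepsilon$. The remaining reductions (general $\varepsilon$ via $D_\varepsilon$, and the harmless $\leq$ versus $<$ slippage at $\delta/2$) are handled the same way the paper handles them in Lemmas~\ref{lem:POTP} and~\ref{lem:sTMP}.
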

\begin{proof}
Let us fix the group $G$, the space $X$, and the action of $G$ on $X$. Suppose first it has the pseudo-orbit tracing property and let us show it has the cover strong topological Markov property. Let $S$ be a finite pseudo-orbit tracing set for the action, assuming as usual that it is symmetric and containing the unit. Let $(F_n)_{n\in\Nat}$ be a cover of $G$ by disjoint sets and let $(x_n)_{n\in\Nat}\subseteq X$ be a sequence of elements satisfying that for every $n\neq m\in\Nat$ and every $g\in S\cdot F_n\cap F_m$, \[d(gx_n,gx_m)\leq \delta/2.\] We produce a $G$-indexed set $(y_g)_{g\in G}$ as follows. For any $g\in G$ we set \[y_g:= g x_n\; \text{ if }g\in F_n.\] Let us verify that $(y_g)_g$ is a pseudo-orbit for the set $S$. Pick any $g\in G$ and $s\in S$. We need to check that $d(s y_g,y_{sg})<\delta/2$. Let $n\in\Nat$ be such that $g\in F_n$. If $sg\in F_n$, then $s y_g$ is, by definition, equal to $y_{sg}$, and there is nothing to prove. So we assume that $sg\in F_m$, where $m\neq n$, and we have $sg\in S\cdot F_n\cap F_m$. It follows we have \[d(s y_g, y_{sg})=d(sg x_n, sg x_m)<\delta/2,\] and the verification is finished. Therefore, there exists $z\in X$ shadowing the pseudo-orbit, i.e. for every $g\in G$ we have $d(gz, y_g)<\delta/2$, consequently for for every $n$ and $g\in F_n$ we have \[d(gz,gx_n)\leq \delta/2,\] and we are done.\medskip

Now assume that the action has the cover strong topological Markov property and we show that it has the uniform strong topological Markov property. Let $S$ be the finite symmetric set from the definition and we show it is also a strong topological Markov set. Let $A\subseteq G$ be a finite set and let $V\subseteq G$ be such that for every $v\neq w\in V$ we have $S\cdot A\cdot v\cap S\cdot A\cdot w-\emptyset$. Let $(x_v)_{v\in V}\subseteq X$ and $y\in X$ be such that $d_{(SA\setminus A)v} (x_v,y)<\delta/2$, for every $v\in V$. Then $\UU:=(A\cdot v)_{v\in V}\cup \{G\setminus A\cdot V\}$ is a cover of $G$ by disjoint sets. Pick $B\neq C\in\UU$ and assume that there is $g\in B$ and $s\in S$ so that $sg\in S\cdot B\cap C$. By the assumption, necessarily $C=G\setminus A\cdot V$, and again by the assumption we get that $d(sg x_v, sg y)<\delta/2$, where $v\in G$ is such that $B=A\cdot v$. By the cover strong topological Markov property there is $z\in X$ so that
\begin{itemize}
	\item for every $v\in V$ and every $g\in A\cdot v$ we have $d(gz, g x_v)<\delta/2$,
	\item for every $g\in G\setminus A\cdot V$ we have $d(gz,gy)<\delta/2$,
\end{itemize}
which is exactly what we were supposed to show.
\end{proof}

We are ready to state the main result of this section. We shall not define here the ends of groups since we will not directly need it. The reader can find it either in Cohen's article \cite{Cohen}, or we refer to \cite[Section 9.1.3]{DruKap-book} for a general treatment and more information.

\begin{theorem}\label{thm:generalizedCohen}
	Let $G$ be a finitely generated group with at least two ends and let $G$ act expansively on a compact metrizable space $X$ so that the action satisfies the strong topological Markov property. Then $X$ has a weakly periodic point.
	
	Moreover, if the action satisfies the weak specification property, then the set of such points is dense in $X$.
\end{theorem}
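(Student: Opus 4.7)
The plan is to combine an ends-theoretic combinatorial setup, a pigeonhole argument, and a cover strong topological Markov gluing. Fix an expansiveness constant $\delta > 0$ as throughout the paper.

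First, I would exploit the two-endedness of $G$ combinatorially. Since $G$ is finitely generated with at least two ends, Stallings' structure theorem (or the direct Cayley-graph construction of \cite{Cohen}) furnishes an element $h \in G$ of infinite order, a finite subset $K \subseteq G$, and a decomposition $G = A \sqcup K \sqcup B$ with $A$ and $B$ both infinite, such that no Cayley-graph edge (for a fixed finite symmetric generating set) crosses between $A$ and $B$, and such that $h \cdot (A \cup K) \subseteq A$. Enlarging $K$ as needed, I arrange that $K$ contains the finite symmetric set defining the cover strong topological Markov property and that the translates $\{h^n K\}_{n \in \Int}$ are pairwise disjoint.

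Next, I produce a candidate weakly periodic point by pigeonhole. Fix any $x_0 \in X$. The $K$-traces $x \mapsto (gx)_{g \in K}$ form a continuous map into the compact space $X^K$, so the orbit $\{h^n x_0\}_{n \in \Int}$ admits indices $n_1 < n_2$ with $d_K(h^{n_1} x_0, h^{n_2} x_0) \leq \delta/2$. Setting $y := h^{n_1} x_0$ and $h' := h^{n_2 - n_1}$, this reads $d_K(y, h'y) \leq \delta/2$, i.e., $y$ and $h'y$ agree on the cut $K$ up to the expansiveness tolerance. I then partition $G$ as $(h'^n D)_{n \in \Int}$, where $D$ is a fundamental domain for left-multiplication by $\langle h' \rangle$ chosen so that $K$ sits in its boundary strip, and assign $x_n := h'^n y$ to $F_n := h'^n D$. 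The cover strong topological Markov compatibility condition on boundaries, when translated via equivariance, reduces to the pigeonhole agreement along $K$ and its $h'$-translates; this is precisely what the ends-structure guarantees. The resulting $z \in X$ satisfies $d(gz, g h'^n y) \leq \delta/2$ for every $g \in F_n$ and every $n \in \Int$. A direct verification shows that $h'z$ satisfies the analogous estimates with indices shifted by one (which are the same estimates), so by expansiveness $h'z = z$. Since $h'$ has infinite order, $\langle h' \rangle \subseteq \mathrm{Stab}(z)$ is infinite, and $z$ is weakly periodic.

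For density under the additional weak specification hypothesis, let $w \in X$, $\varepsilon > 0$, and a finite $E_0 \subseteq G$ be given. I would use weak specification to produce a point agreeing with $w$ on $E_0$ up to $\varepsilon$ and simultaneously agreeing with a sufficiently distant translate $h^N z$ of the weakly periodic point above on a region separated from $E_0$ by the weak specification set. A further application of the cover strong topological Markov gluing, carried out entirely in the far region, ensures the combined point retains a nontrivial infinite stabilizer.

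The principal obstacle is twofold. On the combinatorial side, one must extract the infinite-order element $h$ together with a compatible cut $(A, K, B)$: this is the Stallings-theoretic content consumed by the two-ends hypothesis, and is where \cite{Cohen}'s arguments must be distilled into the present framework. On the dynamical side, because the $G$-action does not preserve $d$, the cover strong topological Markov compatibility condition cannot be verified by a naive translation identity; the ends-structure is critical in forcing all relevant boundary points into translates of $K$, where the pigeonhole agreement applies. Once both are in place, expansiveness closes the argument.
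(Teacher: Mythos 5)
Your outline follows the same route as the paper: distill from \cite{Cohen} an infinite-order element and a partition of $G$ into its translates of one set whose $S$-boundary is confined to translates of a fixed finite set, use pigeonhole/compactness to find a power $h'$ with $d_K(y,h'y)\leq\delta/2$, glue the translates of $y$ via the cover strong topological Markov property, and conclude $h'z=z$ by expansiveness. One technical point in your setup will not survive the ``direct verification'': the partition must consist of \emph{right} translates. With $F_n=h'^nD$ and $x_n=h'^ny$, the quantity $gx_n=(h'^nd)(h'^ny)$ for $g=h'^nd\in F_n$ does not telescope to something independent of $n$, and right multiplication by $h'$ (which is what appears in $g(h'z)=(gh')z$) does not carry $F_n$ to $F_{n+1}$, so the piecewise estimate $\sup_{g}d(gz,g(h'z))\leq\delta$ cannot be assembled. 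The paper's conventions are $F_n=A_m\cdot g^{nm}$ and $x_n=g^{-nm}x$, so that $hx_k=ax$ for $h=ag^{km}$ is independent of $k$ and $hg^m$ lies in the $(k+1)$-st piece; you need the analogous $F_n=Dh'^n$, $x_n=h'^{-n}y$.

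The more substantive gap is in the density step. Having the new point agree with a distant translate $h^Nz$ of the periodic point on a far region does not give it an infinite stabilizer: stabilizers are not inherited from agreement at infinity, and a gluing ``carried out entirely in the far region'' leaves the near region non-periodic, so nothing forces $h'$ (or any infinite-order element) to fix the result. What actually works, and what the paper does, is to use weak specification to produce a single point $z'_w$ agreeing with $w$ on a prescribed finite set and with $z$ outside a larger finite set, and then to rerun the \emph{global periodic} gluing: cover $G$ by the disjoint right translates $\bar A\cdot g^{lmn}$ of a block $\bar A$ of $2l+1$ consecutive pieces chosen large enough to swallow the perturbed region, and glue the family $g^{-lmn}z'_w$. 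The output $z_w$ satisfies $g^{lm}z_w=z_w$ because it is \emph{built} periodically out of translates of one point along a $\langle g^{lm}\rangle$-invariant partition, not because it resembles $z$ far away; the compatibility on the boundaries of the big blocks holds precisely because $z'_w$ agrees with the already-periodic $z$ there. Your sketch has the right tools but is missing this mechanism.
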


The following lemma extracts the main geometric group theoretic ingredient from \cite{Cohen} that is sufficient for our generalization.
\begin{lemma}[extracted from the proof of {\cite[Theorem 2.1]{Cohen}}]\label{lem:Cohen}
	Let $S\subseteq G$ be a finite symmetric set containing the unit. There exists an infinite order $g\in G$ such that for any $m\in\Nat$ there is a subset $A_m\subseteq G$ satisfying the following properties.
	\begin{itemize}
		\item The sets $\{A_m\cdot g^{nm}\}_{n\in\Int}$ form a disjoint partition of $G$.
		\item We have $S\cdot A_m\subseteq A_m\cup S^2\cup S^2\cdot g^m$.
		\item If for some $k\neq l\in\Int$ we have $S\cdot A_m\cdot g^{km}\cap A_m\cdot g^{lm}\neq\emptyset$, then $|k-l|=1$ and for any $h\in S\cdot A_m\cdot g^{km}\cap A_m\cdot g^{lm}$ we have either \[h\in S^2\cdot g^{km}\text{ and }hg^m\in A_m\cdot g^{km}\; (\text{when }k=l+1),\] or \[h\in S^2\cdot g^{(k+1)m}\text{ and }hg^{-m}\in A_m\cdot g^{km}\; (\text{when }k=l-1).\]
	\end{itemize}
\end{lemma}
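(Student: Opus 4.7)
My plan is to deduce this combinatorial/geometric lemma from the hypothesis that $G$ has at least two ends, in three stages: first locate a coherent bi-infinite axis in the Cayley graph via the structure theory of multi-ended groups; second, define $A_m$ as a natural ``slab'' of finite width along this axis; third, verify the three listed properties by local inspection of the boundaries of the slab.

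For the first stage, using that $G$ has at least two ends, I would find a finite $S$-connected set $K \subseteq G$ with $S^2 \cup \{1_G\} \subseteq K$ such that $\mathrm{Cay}(G,S) \setminus K$ has two distinct infinite connected components $C_+$ and $C_-$, together with an infinite-order element $g \in G$ with $gK \subseteq C_+$ (in particular $gK \cap K = \emptyset$) and $g^{-1}K \subseteq C_-$. Replacing $g$ by a sufficiently large power if necessary and exploiting the hyperbolic-type dynamics of $g$ on the space of ends, I would arrange that the translates $\{g^n K\}_{n \in \Int}$ are pairwise disjoint and line up in a single bi-infinite chain, with $\mathrm{Cay}(G,S) \setminus \bigcup_n g^n K$ splitting into connected interstitial pieces $W_n$ (each lying between $g^n K$ and $g^{n+1}K$). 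In the two-ended case this reduces to the elementary structure of virtually cyclic groups (take $g$ a generator of an infinite cyclic subgroup of finite index); in the infinitely-ended case, Stallings' splitting theorem provides a non-trivial action of $G$ on a tree with finite edge stabilizers, and $g$ should be chosen hyperbolic on this tree, so that $K$ can be taken to be a suitable neighborhood of a single edge of the axis of $g$.

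For the second stage, for each $m \in \Nat$ I would define the slab
\[
A_m \;:=\; K \,\cup\, W_0 \,\cup\, gK \,\cup\, W_1 \,\cup\, \cdots \,\cup\, g^{m-1}K \,\cup\, W_{m-1}.
\]
Because the chain structure is geometrically coherent and $g^{nm}$ translates any one fundamental period into the next, $A_m$ is a set of left-coset representatives for $\langle g^m\rangle$, and the right translates $\{A_m g^{nm}\}_{n \in \Int}$ partition $G$, giving the first bullet. For the second bullet, any $a \in A_m$ whose image $sa$ under some $s \in S$ escapes $A_m$ must sit near one of the two boundary blocks $K$ or $g^{m-1}K$ (the interior of the slab is an $S$-buffer because $K$ is $S$-connected and contains $S^2$); in those boundary cases $sa$ is forced into $S^2$ (when escaping backwards through $K$) or into $S^2 g^m$ (when escaping forwards through $g^m K$). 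The third bullet is a refinement of the same analysis: any overlap $SA_m g^{km} \cap A_m g^{lm}$ with $k \neq l$ can only occur at a boundary between \emph{adjacent} translates of the chain, forcing $|k-l|=1$, and the precise description of $h$ as lying in $S^2 g^{km}$ or $S^2 g^{(k+1)m}$ follows by tracking which side of the boundary is being crossed.

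The main obstacle is Stage~1: simultaneously arranging that $g$ has infinite order, that $\{g^n K\}_{n\in\Int}$ are pairwise disjoint, and that the chain is geometrically coherent enough to yield well-defined interstitial pieces $W_n$. This is essentially the geometric content of Stallings' theorem for infinitely-ended groups and requires delicate choice of both $K$ and the power of $g$; this is where Cohen's argument invests most of its work. Once Stage~1 is secured, Stages~2 and~3 are fairly mechanical boundary-tracking.
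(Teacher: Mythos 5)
Your outline follows the same route as the paper's proof, which is itself only an extraction from \cite{Cohen}: find an ``axial'' infinite-order element $g$ whose translates of a finite separating set line up in a coherent bi-infinite chain, define $A_m$ as the slab spanning $m$ consecutive periods (Cohen's set $\mathcal{S}$ from his Definition 2.6), and read off the three bullets from boundary considerations (Cohen's Lemma 2.7 and the case analysis on pp.~610--611). So the strategy is right. The problem is that essentially all of the content of the lemma sits in your Stage~1, and you do not prove it --- you correctly flag it as ``where Cohen's argument invests most of its work'' and leave it there. In particular, the simultaneous requirements that $g$ have infinite order, that the translates of $K$ be pairwise disjoint, and that they separate the Cayley graph \emph{coherently} (so that $g^{km}K$ separates $g^{lm}K$ from $g^{l'm}K$ whenever $l<k<l'$) are exactly what Cohen's notion of an $n$-axial element (his Lemma 2.5) delivers; without that input the remaining stages cannot start. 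A complete write-up must either reproduce that argument or cite it, as the paper does.

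Two further points would need repair even granting Stage~1. First, there is a left/right mismatch: you build the chain from \emph{left} translates $g^nK$, but the lemma's partition uses \emph{right} translates $A_m\cdot g^{nm}$, and $A_m g^{m}$ is not $g^m A_m$; since the adjacency in the second and third bullets is \emph{left} multiplication by $S$, the chain must consist of right translates $K\cdot g^{nm}$ (so that right translation by $g^m$ is an automorphism of the relevant Cayley graph), and then the forward escape set is $S^2\cdot g^m$ rather than $g^mS^2$. The paper explicitly warns that the order of multiplication in \cite{Cohen} is reversed, and this is precisely where that matters. Second, in the infinitely-ended case the complement of $\bigcup_n K\cdot g^{nm}$ does not consist only of interstitial pieces between consecutive translates: there are components adjacent to a single translate (side branches), and a convention is needed to assign each of them to exactly one slab in a $g^m$-equivariant way; otherwise the first bullet (that the $A_m\cdot g^{nm}$ partition all of $G$) fails. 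Cohen's definition of $\mathcal{S}$ via which translates of the separating set screen a point from a fixed end handles this automatically; your slab formula $K\cup W_0\cup\cdots\cup K g^{(m-1)m}\cup W_{m-1}$ does not address it.
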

\begin{proof}[Proof of Lemma~\ref{lem:Cohen}.]
Let $S\subseteq G$ be the given finite set as in the statement of the lemma. Fix a word metric $\rho$ on $G$ with respect to some finite symmetric generating set of $G$ and let $n\in\Nat$ be big enough so that $n\geq N_G$, where $N_G$ is a constant depending on $G$ from \cite[Lemma 2.5]{Cohen}, and $S\subseteq B_G(n)$, where $B_G(n)$ is a ball around the unit of $G$ with respect to $\rho$. In fact, we may without loss of generality assume that $S=B_G(n)$.

Let $g'\in G$ be the element obtained by \cite[Lemma 2.5]{Cohen} as $n$-axial (we refer for the terminology to \cite{Cohen}, here we do not need to know its precise meaning). We have that $g'$ has infinite order (see the paragraph after the proof of \cite[Lemma 2.5]{Cohen}) and we set $g$ to be some power of $g'$ so that $S^2\cdot g^k\cap S^2=\emptyset$, exactly as in \cite[p. 609]{Cohen}.

In the rest of the proof we show that $g\in G$ is as desired (we already know it has infinite order) and we fix additionally some $m\in\Nat$. Now we let $A_m$ be equal to the set $\mathcal{S}$ from \cite[Definition 2.6]{Cohen} using the same integer $m$. The first two items of Lemma~\ref{lem:Cohen} then follow from \cite[Lemma 2.7]{Cohen} (notice that the order of multiplication in \cite{Cohen} is reversed).

Finally, we show the last item. So pick some $k\neq l\in\Nat$ such that $S\cdot A_m\cdot g^{km}\cap A_m\cdot g^{lm}\neq\emptyset$. Translating these sets from the right by $g^{-km}$, we can assume that $k=0$. So suppose that there is $h\in S\cdot A_m\cap A_m\cdot g^{lm}$. We have already showed that we also have that $h\in A_m\cup S^2\cup S^2\cdot g^m$. Since $A_m$ and $A_m\cdot g^{lm}$ are disjoint, we must have that $h\in S^2\cup S^2\cdot g^m$.
\begin{enumerate}
	\item If we have that $h\in S^2$, then we are in the situation as in the last item on \cite[p. 610]{Cohen} (notice the typo there: our case here corresponds to the case $x\in B^2\setminus \mathcal{S}$, although there is mistakenly stated $x\in g^m B^2\setminus \mathcal{S}$) and by the argument there we have $hg^m\in A_m$, and so $h\in A_m\cdot g^{-m}$ and $l=-1$.
	\item If we have that $h\in S^2\cdot g^m$ then like in the first item on \cite[p. 611]{Cohen} (the case there when $x\in g^m B^2\setminus \mathcal{S}$) we get that $hg^{-m}\in A_m$, so $h\in A_m\cdot g^m$ and $l=1$.
\end{enumerate}
\end{proof}

\begin{proof}[Proof of Theorem~\ref{thm:generalizedCohen}.]
Let $S\subseteq G$ be a finite symmetric set containing the unit that is a cover strong topological Markov set for the action.

Fix now some compatible metric $d$ on $X$ and suppose the expansiveness constant, with respect to $d$, is some $\delta>0$. Fix also a finite $\delta/4$-dense net $N\subseteq X$. That is, for every $x\in X$ there is $y\in N$ such that $d(x,y)<\delta/4$. Let $\NN$ denote the set of non-empty subsets of $N$. Let $\eta:X\rightarrow \NN$ be the function associating to every $x\in X$ the set of those elements of $N$ that realize the distance of $x$ from $N$; that is, for every $x\in X$ and $y\in \eta(x)$ \[d(x,y)=d(x,N).\] Finally, let $\xi:x\rightarrow \NN^{S^2}$ be the function defined by, for $x\in X$ and $s\in S^2$, \[\xi(x)(s):=\eta(sx).\]

Pick now some arbitrary $x\in X$. Since $\NN^{S^2}$ is a finite set, there must exist $m,m'\in\Nat$ so that $\xi(g^m x)=\xi(g^{m'}x)$. By replacing $x$ by $g^{-m}x$ if necessary, we may without loss of generality assume that in fact there is $m\in \Nat$ so that \[\xi(x)=\xi(g^m x).\] By the definition of the function $\xi$ and by the triangle inequality, we obtain \[d_{S^2}(x,g^m x)<\delta/2.\]

For every $n\in\Int$, set $x_n:=g^{-nm}x$. We wish to apply the cover strong topological Markov property with respect to the partition $\{A_m\cdot g^{nm}\colon n\in\Nat\}$ and the set of elements $(x_n)_{n\in\Int}$ to obtain an element $z\in X$ satisfying \[\forall k\in\Int\; (d_{A_m\cdot g^{km}}(z,x_k)<\delta/2).\] In order to show that, we need to check that for every $k\neq l\in\Int$ and $s\in S$, if $s\cdot A_m\cdot g^{km}\cap A_m\cdot g^{lm}\neq\emptyset$, i.e. there are $a_1,a_2\in A_m$ such that $sa_1g^{km}=a_2g^{lm}$, then \[d((sa_1g^{km})x_k,(a_2g^{lm})x_l)\leq \delta/2.\] By Lemma~\ref{lem:Cohen}, we have that $|k-l|=1$ and 
\begin{enumerate}
	\item either $k=l+1$ and $sa_1g^{km}\in S^2\cdot g^{km}$ and $sa_1g^{(k+1)m}\in A_m\cdot g^{km}$,
	\item or $k=l-1$ and  $sa_1g^{km}\in S^2\cdot g^{(k+1)m}$ and $sa_1g^{(k-1)m}\in A_m\cdot g^{km}$.\bigskip
\end{enumerate}

In the former case, since $d_{S^2}(x,g^m x)<\delta/2$ and $sa_1\in S^2$, we have \[\begin{split}d((sa_1g^{km})x_k,(a_2g^{lm})x_l) &=d((sa_1g^{km}g^{-km})x,(sa_1g^{km}g^{-lm})x)\\ &=d((sa_1)x,(sa_1g^m)x)<\delta/2.\end{split}\]

In the latter case, we have analogously, since $d_{S^2}(x,g^m x)<\delta/2$ and $sa_1g^{-m}\in S^2$,
\[\begin{split}d((sa_1g^{km})x_k,(a_2g^{lm})x_l)&=d((sa_1g^{km}g^{-km})x,(sa_1g^{km}g^{-lm})x)\\ &=d((sa_1)x,(sa_1g^{-m})x)\\ &=d((sa_1g^{-m}g^m)x,(sa_1g^{-m})x)<\delta/2.\end{split}\]\bigskip

It follows that there indeed is $z\in X$ satisfying, as desired, \[\forall k\in\Int\; (d_{A_m\cdot g^{km}}(z,x_k)<\delta/2).\] We show that $g^m z=z$. If not, there is, by expansiveness, some $h\in G$ such that $d(hz,(hg^m)z)>\delta$. Since the sets $(A_m\cdot g^{km})_{k\in\Int}$ form a cover of $G$, there is $k\in\Int$ and $a\in A_m$ such that $h=ag^{km}$. It follows that \[\begin{split}d(hz,(hg^m)z)&\leq d(hz,h x_k)+d(h x_k,(hg^m)x_{k+1})+d((hg^m)x_{k+1},(hg^m)z)\\ &= d(hz,h x_k)+d(a x,a x)+d((hg^m)x_{k+1},(hg^m)z)\\ & \leq \delta/2+\delta/2=\delta,\end{split}\] a contradiction.\bigskip

We now show the `moreover' part of the statement of the theorem. We retain the notation established during the proof.

 Let $W\subseteq G$ be a finite symmetric set containg the unit that is a weak specification set for the action. In order to prove the density, pick some $w\in X$ and $\delta/2>\varepsilon>0$ and we shall find a weakly periodic point $z_w\in X$ such that $d(z_w,w)<\varepsilon$. By Lemma~\ref{lem:keyexpansivefact}, there is a finite set $D_\varepsilon\subseteq G$ such that for every $z'\in X$, if $d_{D_\varepsilon}(z',w)\leq \delta$, then $d(z',w)<\varepsilon$. Using the weak specification property, we can find $z'_w\in X$ such that
 \begin{itemize}
 	\item $d_{D^2_\varepsilon}(z'_w,w)\leq \delta/2$ (thus $d_{D_\varepsilon}(z'_w,w)<\varepsilon$),
 	\item and $d_{G\setminus (D_{\delta/4}\cdot W\cdot D^2_\varepsilon)}(z'_w,z)\leq \delta/4$.
 \end{itemize}
 Let $l\in\Nat$ be such that $D_{\delta/4}\cdot W\cdot D^2_\varepsilon\subseteq \bigcup_{i\in [-l,l]} A_m\cdot g^{im}$. We now set $\bar A:=\bigcup_{i\in [-l,l]} A_m\cdot g^{im}$. We consider the cover of $G$ by the disjoint sets $\{\bar A\cdot g^{lmn}\colon n\in\Int\}$ and we define the elements $(w_n)_{n\in\Int}$ by setting $w_n:=g^{-lmn}z'_w$. We wish to apply the cover strong topological Markov property with this cover of $G$ and this set of elements of $X$. We need to check that for any $n,n'\in\Int$ and any $h\in S\cdot \bar A\cdot g^{lmn}\cap \bar A\cdot g^{lmn'}$, we have $d(hw_n,hw_{n'})\leq \delta/2$. We may assume that $h\notin \bar A\cdot g^{lmn}$, in particular, $hg^{-lmn}\notin D_{\delta/4}\cdot W\cdot D^2_\varepsilon$, and that $h=sag^{lmn}$ for some $s\in S$ and $a\in \bar A$. Then we have \[\begin{split}d(hw_n,hw_{n'}) & \leq  d((sa) z'_w,(sa)z)\\ &+d((sa)z,(sag^{(n-n')lm})z)+d((sag^{(n-n')lm})z, (sag^{(n-n')lm})z'_w)\\ & \leq \delta/4+0+\delta/4=\delta/2,\end{split}\] which verifies the claim. So there exists $z_w\in X$ such that for every $n$ and $h\in \bar A\cdot g^{lmn}$ we have $d(h z_w, (hg^{-lmn})z'_w)\leq \delta/2$.
 
 Then we have \[d_{D_\varepsilon}(z_w,w)\leq d_{D_\varepsilon}(z_w,z'_w)+d_{D_\varepsilon}(z'_w,w)\leq \delta/2+\varepsilon\leq \delta,\] thus $d(z_w,w)<\varepsilon$.
 
 Finally, we claim that $g^{lm}z_w=z_w$. However, the proof is completely analogous to the proof that $g^m z=z$ and is left to the reader.
\end{proof}

Cohen mentions in \cite{Cohen} that he originally conjectured that given a finitely generated group $G$ there exists a subshift of finite type of $G$ that is strongly aperiodic if and only if $G$ is one-ended. This was refuted by Jeandel in \cite{Jea} who showed that groups with undecidable word problem do not admit such subshifts of finite type.

While Cohen's geometric argument can be generalized to more general dynamical systems, as demonstrated above, this is less clear with Jeandel's computability argument which seems to be more closely tight to subshifts. It is therefore plausible, although it would be surprising, that the original Cohen's conjecture is correct within the larger class of expansive dynamical systems satisfying certain topological Markov properties.\bigskip

\noindent\textbf{Acknowledgements.} We would like to thank to Tullio Ceccherini-Silberstein for his comments and for introducing us to topological Markov properties.

\bibliographystyle{siam}
\bibliography{references}
\end{document}